\renewcommand{\Re}{\operatorname{Re}}
\newcommand{\dd}{\mathrm{d}}
\newcommand{\loc}{\mathrm{loc}}
\newcommand{\R}{\mathbb R}
\newcommand{\ep}{\varepsilon}
\newcommand{\norm}[1]{\|  #1 \|}
\newcommand{\inner}[3][]{(#2,#3 )_{#1}}
\def\cL{\mathcal{L}}
\newtheorem{theorem}{Theorem}[section]
\newtheorem{lemma}[theorem]{Lemma}
\newtheorem{proposition}[theorem]{Proposition}
\newtheorem{corollary}[theorem]{Corollary}
\newtheorem{ass}[theorem]{Assumption}
\theoremstyle{definition}
\newtheorem{definition}[theorem]{Definition}
\theoremstyle{remark}
\newtheorem{remark}[theorem]{Remark}
\numberwithin{equation}{section}
\begin{document}

\title[Semilinear stochastic Volterra equations]{Existence, uniqueness and regularity for a class of semilinear stochastic Volterra equations with multiplicative noise}

\author{Boris Baeumer}
\address{Department of Mathematics and Statistics, University of Otago, PO Box 56, Dunedin, 9054, New Zealand.}
\email{bbaeumer@maths.otago.ac.nz}

\author{Matthias Geissert}
\address{TU Darmstadt, Angewandte Analysis, Schlossgartenstr. 7, 64289 Darmstadt, Germany.}
\email{geissert@mathematik.tu-darmstadt.de}

\author{Mih\'aly Kov\'acs}
\address{Department of Mathematics and Statistics, University of Otago, PO Box 56, Dunedin, 9054, New Zealand.}
\email{mkovacs@maths.otago.ac.nz}

%%    Information for first author
%\author{Jacques Printems}
%%    Address of record for the research reported here
%\address{Laboratoire d'Analyse et de Math\'ematiques Appliqu\'ees, CNRS UMR 8050, 61, avenue du G\'en\'eral de Gaulle, Universit\'e Paris--Est, 94010 Cr\'eteil, France.}
%%    Current address
%%\curraddr{Department of Mathematics and Statistics,
%%Case Western Reserve University, Cleveland, Ohio 43403}
%\email{printems@u-pec.fr}
%    \thanks will become a 1st page footnote.
%\thanks{The first author was supported in part by NSF Grant \#000000.}
\date{}

%\dedicatory{This paper is dedicated to our advisors.}

%    General info
\subjclass[2010]{35B65, 60H15, 35R60, 45D05, 34A08}
\keywords{Volterra equation, stochastic partial differential equation, fractional differential equation, Wiener process, Gaussian noise, multiplicative noise, regularity, existence, uniqueness.}

\begin{abstract}
We consider a class of semilinear Volterra type stochastic evolution equation driven by multiplicative Gaussian noise. The memory kernel, not necessarily analytic, is such that the deterministic linear equation exhibits a parabolic character. Under appropriate Lipschitz-type and linear growth assumptions on the nonlinear terms we show that the unique mild solution is mean-$p$ H\"older continuous with values in an appropriate Sobolev space depending on the kernel and the data. In particular, we obtain pathwise space-time (Sobolev-H\"older) regularity of the solution together with a maximal type bound on the spatial Sobolev norm. As one of the main technical tools we establish a smoothing property of the derivative of the deterministic evolution operator family.
\end{abstract}
\maketitle

\section{Introduction}
We consider a stochastic evolution equation of Volterra type driven by multiplicative Gaussian noise given in the It\^o form
\begin{equation}\label{eq:problem}
\begin{aligned}
\dd u+\left(A\int\limits_0^t b(t-s)u(s)\dd s\right)\dd t&=F(u)\dd t+G(u)\dd
	W,\quad t>0,\\
	u(0)&=u_0.
\end{aligned}
\end{equation}
The process $\{u(t)\}_{t\in [0,T]}$, defined on a filtered probability space $(\Omega,\mathcal{F},\mathbb{P},\{\mathcal{F}_t\}_{t\ge 0})$ with a normal filtration $\{\mathcal{F}_t\}_{t\ge 0}$, takes values in a separable Hilbert space $H$ with inner product $(\cdot\, ,\cdot)$ and induced norm $\|\cdot\|$. The process $W$ is a nuclear $Q$-Wiener process with respect to the filtration
with values in some separable Hilbert space $U$. The operator $A:\mathcal{D}(A)\subset H\to H$ is assumed to be linear, unbounded, self-adjoint and positive definite. The main example we have in mind for $A$ is the Dirichlet Laplacian on $H=L^2(\mathcal{O})$, where $\mathcal{O}\subset \mathbb{R}^d$ is a spatial domain with smooth boundary. Throughout the paper the kernel $b$ is kept as general as possible but so that the deterministic, linear, homogeneous version of \eqref{eq:problem} exhibits a parabolic character. In particular, we assume that the Laplace transform $\hat{b}$ of $b$ maps the right half-plane into a sector around the real axis with central angle less than $\pi$, and $\hat{b}$ satisfies some regularity and growth conditions, see Assumption \ref{ass:convKernel} and Remark \ref{rem:realb} for the precise conditions on $b$. One of the important kernels that satisfies this assumption is the tempered Riesz kernel $b(t)=\frac{1}{\Gamma(\rho-1)}t^{\rho-2}e^{-\eta t}$, where $1<\rho<2$ and $\eta\ge 0$.
The main goal of the paper is to extend the results of \cite{JR} and \cite{KrLa}, where SPDEs without a memory term are considered, to the solution of \eqref{eq:problem} under analogous, appropriate Lipschitz and linear growths  assumptions on $f$ and $G$, see Assumption \ref{ass:main}. That is, in Theorem \ref{thm:eu}, we prove existence, uniqueness, and mean-$p$ H\"older regularity in time with values in fractional order spaces $\dot{H}^{\beta}$ associated with the fractional powers of $A$ (see, Subsection \ref{sc:ass}). Then corresponding pathwise regularity results immediately follow, see  Corollary \ref{cor:path}.

There are several approaches to stochastic partial differential equations and therefore to Volterra type stochastic partial differential equations as well. Firstly, one chooses a framework for infinite dimensional stochastic integration. One possibility, as in the present paper, is to choose abstract stochastic integration theory in Hilbert spaces, such as in \cite{DPZ} and \cite{PR}. Then, one has the option to consider a semigroup framework from \cite{DPZ} with a suitable state space that incorporates the history of the process as, for example, in \cite{Barbu,BoDP}. In the latter papers existence and uniqueness is established for a class of semilinear Volterra type SPDEs with multiplicative noise under partly more general and partly more restrictive assumptions then in this paper and space-time regularity is not investigated.

The other option for defining solutions to \eqref{eq:problem}, which we also choose to follow, is the resolvent family approach of Pr\"uss \cite{pruss} based on the Laplace transform. This has been mainly used to study linear equations with additive noise, see \cite{CDaPP,Karczewskabook,Karczewska2012,Karczewska2009,Sperlich}, with the exception of \cite{BoFa} and \cite{Keck} where a semilinear equation with additive noise and, respectively, multiplicative noise is considered. All these papers are mainly concerned with existence and uniqueness, and not so much with regularity, apart for some limited analysis in the linear additive case \cite{CDaPP,Sperlich}.

 Finally, an other possibility is to use Krylov's approach for stochastic integration in case $H$ is specifically $L^2(\mathcal{O})$ (more generally $L^p(\mathcal{O})$) where stochastic integrals are taken pointwise. This approach is taken, for example, in \cite{DLO} where a semilinear Volterra type equation is considered with the specific kernel $b(t)=\frac{1}{\Gamma(\rho-1)}t^{\rho-2}$. There the authors obtain regularity results (but not pathwise ones) which are the same flavour as ours as they show a balance between spatial and temporal regularity. However the results there are rather difficult to compare with ours because the different framework as it is also pointed out there.

The paper is organized as follows. In Section \ref{sec:prelim} we introduce notation, collect some necessary background material and state the main assumptions on the data in \eqref{eq:problem}. In Section \ref{sec:mr} we first introduce the mild solution concept. Theorem \ref{thm:eu} contains the main result of the paper by establishing existence, uniqueness and space-time regularity of the mild solution of \eqref{eq:problem}. The proof uses a fixed point argument in a space of low regularity together with a regularity bootstrapping to obtain the highest possible regularity. The key result for the bootstrapping argument is stated in Proposition \ref{prop:regularity}. The section further contains a pathwise space-time regularity result (Corollary \ref{cor:path}) and ends with Subsection \ref{ss:an} where the special case of additive noise is discussed. The latter is in fact important as maximal type space-regularity for the linear equation is often assumed when handling the nonlinear equation, see, for example \cite{Barbu,BoFa}.  Hence, in Corollary \ref{cor:tcn}, we provide a class of convolution kernels $b$ and give conditions on $Q$ so that it holds. In Subsection \ref{ss:an} we also demonstrate that the abstract framework of the paper is flexible enough to accommodate even additive space-time white noise, see Remark \ref{rem:wn}. This is a new feature compared to the regularity analysis of \cite{JR,KrLa} for the memoryless case.

The Appendix contains some technical results on the resolvent family $\{S(t)\}_{t\ge 0}$ of the linear, deterministic, homogeneous equation. We would like to highlight the smoothing property \eqref{eq:estSDot} of the derivative $\dot{S}$ which is interesting on its own due to the generality of $b$. The proof relies on Lemma \ref{lem:smu} where certain estimates on the second derivative of the corresponding scalar equation is derived.

\section{Preliminaries}\label{sec:prelim}
In this section we collect some background material, introduce notation and state the main hypothesis on the data in \eqref{eq:problem}.
\subsection{Infinite dimensional stochastic background}
   Let $Q$ be a positive semidefinite bounded linear operator on some separable Hilbert space $U$ with finite trace. Let $\{W(t)\}_{t\ge 0}$ be a $U$-valued Wiener process with covariance operator $Q$ (Q-Wiener process for short) on a probability space $(\Omega, \mathcal{F}, \mathbb{P})$. We equip the probability space with the normal filtration generated by $W$.  Let $Q^{\frac12}$ denote the unique positive semidefinite square root of $Q$. The so-called
\textit{Cameron-Martin} space is defined as $U_0 := Q^{\frac12}(U)$ with inner product
\[
 \inner[U_0]{u_0}{v_0} = \inner[U]{Q^{-\frac12}u_0}{Q^{-\frac12}v_0},\quad u_0,v_0 \in
 U_0,
\]
and induced norm $\|\cdot\|_{U_0}$, where $Q^{-\frac12}$ denotes the pseudo inverse of $Q^{\frac12}$ in case it
is not one-to-one. Let $L_2^0$ denote the space of Hilbert Schmidt operators $T:U_0\to H$ endowed with the norm
$$
\|T\|_{L^0_{2}}^2=\sum_{k=1}^\infty\|Tf_k\|^2
$$
where $\{f_k\}$ is any orthonormal basis for $U_0$.
The next result is a generalized version of It\^o's Isometry which can be found in \cite[Lemma 7.2]{DPZ}. The version cited here is adopted from \cite[Lemma 3.3]{KrLa}.
\begin{proposition}\label{prop:ito}
Let $p\ge 2$ and $\{\Phi(t)\}_{t\in [t_1,t_2]}$ be a $L_2^0$-valued predictable process such that
$$
\left\|\left(\int_{t_1}^{t_2}\|\Phi(s)\|^2_{L_2^0}\,\dd s\right)^{\frac12}\right\|_{L^p(\Omega;\mathbb{R})}<\infty.
$$
Then, there is $C(p)>0$ such that
$$
\left\|\int_{t_1}^{t_2}\Phi(s)\,\dd W(s)\right\|_{L^p(\Omega;H)}\le C(p)\left\|\left(\int_{t_1}^{t_2}\|\Phi(s)\|^2_{L_2^0}\,\dd s\right)^{\frac12}\right\|_{L^p(\Omega;\mathbb{R})}.
$$
\end{proposition}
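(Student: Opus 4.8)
The plan is to treat this as the Hilbert-space-valued Burkholder--Davis--Gundy inequality applied to the continuous martingale $M(t):=\int_{t_1}^{t}\Phi(s)\,\dd W(s)$, whose quadratic variation equals $\int_{t_1}^{t}\|\Phi(s)\|_{L_2^0}^2\,\dd s$. The case $p=2$ is nothing but the It\^o isometry (with $C(2)=1$), so one may assume $p>2$. To avoid integrability obstructions it is natural to establish the estimate first for bounded elementary (simple) predictable integrands $\Phi$, for which $M$ is a genuine square-integrable $H$-valued martingale with all moments finite, and then to recover the general case by approximating $\Phi$ in the norm appearing on the right-hand side and passing to the limit (along a subsequence, using Fatou's lemma).

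For a bounded elementary integrand I would apply the It\^o formula in $H$ to the function $g(x):=\|x\|^p$, whose first and second Fr\'echet derivatives satisfy $\|g'(x)\|\le p\|x\|^{p-1}$ and $\|g''(x)\|\le p(p-1)\|x\|^{p-2}$. This yields
\begin{equation*}
\|M(t_2)\|^p=\int_{t_1}^{t_2}\big(g'(M(s)),\Phi(s)\,\dd W(s)\big)+\tfrac12\int_{t_1}^{t_2}\Tr\!\big[g''(M(s))\,\Phi(s)Q\Phi(s)^*\big]\,\dd s.
\end{equation*}
Taking expectations annihilates the stochastic integral (a true martingale here), and since $\Phi(s)Q\Phi(s)^*$ is positive with $\Tr[\Phi(s)Q\Phi(s)^*]=\|\Phi(s)\|_{L_2^0}^2$, the trace term is controlled by $\E\int_{t_1}^{t_2}\Tr[\cdots]\,\dd s\le C_p\,\E\int_{t_1}^{t_2}\|M(s)\|^{p-2}\|\Phi(s)\|_{L_2^0}^2\,\dd s$.

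The decisive step is to close the estimate by a self-improvement argument. Bounding $\|M(s)\|^{p-2}$ by $\sup_{t_1\le s\le t_2}\|M(s)\|^{p-2}$ and applying H\"older's inequality in $\Omega$ with exponents $\frac{p}{p-2}$ and $\frac{p}{2}$ gives
\begin{equation*}
\E\int_{t_1}^{t_2}\|M(s)\|^{p-2}\|\Phi(s)\|_{L_2^0}^2\,\dd s\le\Big(\E\sup_{t_1\le s\le t_2}\|M(s)\|^{p}\Big)^{\frac{p-2}{p}}\Big(\E\Big(\int_{t_1}^{t_2}\|\Phi(s)\|_{L_2^0}^2\,\dd s\Big)^{\frac{p}{2}}\Big)^{\frac{2}{p}}.
\end{equation*}
Doob's maximal inequality, applicable because $\|M(\cdot)\|$ is a nonnegative submartingale, then replaces $\E\sup_s\|M(s)\|^p$ by a constant multiple of $\E\|M(t_2)\|^p$. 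Dividing by the factor $(\E\|M(t_2)\|^p)^{\frac{p-2}{p}}$ absorbs this term and leaves exactly the claimed bound for $\E\|M(t_2)\|^p$, which after taking $p$-th roots is the assertion of the proposition.

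The main obstacle I anticipate is making the absorption step rigorous: the division by $(\E\|M(t_2)\|^p)^{\frac{p-2}{p}}$ is legitimate only once this quantity is known to be finite and positive, which is precisely why one reduces to bounded elementary integrands (where finiteness is automatic) before passing to the general case, and why the trivial case $M\equiv 0$ must be set aside. A secondary technical point is the justification of the It\^o formula for the map $x\mapsto\|x\|^p$, which is not twice differentiable at the origin when $p$ is not an even integer; this is handled either by regularising the norm (for instance via $(\|x\|^2+\delta)^{p/2}$ and letting $\delta\downarrow 0$) or by the stopping-time localisation standard in \cite{DPZ}. Once the inequality is obtained for elementary integrands with a constant $C(p)$ independent of $\Phi$, the extension to arbitrary predictable $\Phi$ satisfying the stated integrability condition is routine.
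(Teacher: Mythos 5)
Your argument is correct in substance, but there is nothing in the paper to compare it against: the authors do not prove Proposition \ref{prop:ito} at all. As the sentence immediately preceding the statement says, it is quoted from the literature --- it is \cite[Lemma 7.2]{DPZ}, in the form given in \cite[Lemma 3.3]{KrLa}. What you have produced is, in effect, a reconstruction of the proof of that cited lemma: It\^o's formula applied to $g(x)=\|x\|^p$ along the martingale $M(t)=\int_{t_1}^{t}\Phi(s)\,\dd W(s)$, the trace bound $\Tr\bigl[g''(M(s))\,\Phi(s)Q\Phi(s)^*\bigr]\le p(p-1)\|M(s)\|^{p-2}\|\Phi(s)\|_{L_2^0}^2$, H\"older with exponents $\frac{p}{p-2}$ and $\frac{p}{2}$, Doob's maximal inequality for the nonnegative submartingale $\|M\|$, and absorption of the factor $(\E\|M(t_2)\|^p)^{(p-2)/p}$, carried out first for bounded elementary integrands and then extended by approximation and Fatou. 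This is exactly the classical route, so your proposal stands as a complete proof strategy; the paper simply buys brevity by citing. Two small corrections to your write-up: (i) the map $x\mapsto\|x\|^p$ is in fact twice continuously Fr\'echet differentiable on all of $H$ whenever $p>2$ --- your own bound $\|g''(x)\|\le p(p-1)\|x\|^{p-2}$ shows $g''(x)\to 0$ as $x\to 0$, and it is only derivatives of order three and higher that fail at the origin for non-even $p$ --- so the regularization $(\|x\|^2+\delta)^{p/2}$ is harmless but unnecessary; (ii) the closing approximation step needs convergence of elementary processes $\Phi_n\to\Phi$ in the norm $\bigl\|\bigl(\int_{t_1}^{t_2}\|\cdot\|^2_{L_2^0}\,\dd s\bigr)^{1/2}\bigr\|_{L^p(\Omega;\R)}$ appearing on the right-hand side, not merely in the usual $L^2(\Omega\times[t_1,t_2])$ sense in which the stochastic integral is constructed; this is obtained by truncation and dominated convergence and is indeed routine, as you say, but it is the one place where an unexamined appeal to the standard $L^2$ approximation theory would leave a gap.
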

\begin{remark}\label{rem:UH}
In practice, one usually starts with some covariance operator $\tilde{Q}:H\to H$ not necessarily of trace class to describe the correlation structure of the noise. Here $H$  is the same Hilbert space where the solution $u$ of \eqref{eq:problem} is defined. One then chooses a Hilbert space $U$ and an embedding $J:\tilde{Q}^{\frac12}(H)\to U$ such that $J$ is a Hilbert-Schmidt operator. This is always possible. Then, in a standard way, one can define a Wiener process with values in $U$ with the trace-class covariance operator $Q:=JJ^*$ using an orthogonal series and independent Brownian motions. What is more important though is that the Cameron-Martin space of $JJ^*$ is isometrically isomorphic to the Cameron Martin space of $\tilde{Q}$. Therefore, while the Wiener process $W$ has values in some Hilbert space $U$ which can be chosen many ways, the space of processes that one can integrate with respect to $W$ and the integral itself is independent of the choice of $U$ and the embedding $J$. For a particular instance of this construction, we refer to Remark \ref{rem:wn}.
\end{remark}
Finally we recall a version of Kolmogorov's continuity theorem which is a simplified version of \cite[Theorem 1.4.1]{Kunita}. To measure time regularity of functions defined on an interval $[t_1,t_2]$ with values in a normed space $B$ with norm $\|\cdot\|_{B}$ we introduce the H\"older spaces $C^{\alpha}([t_1,t_2];B)$, $0<\alpha<1$, equipped with the seminorm
$$
\|f\|_{C^{\alpha}([t_1,t_2];B)}=\sup_{s,t\in [t_1,t_2]}\frac{\|f(t)-f(s)\|_{B}}{|t-s|^{\alpha}}.
$$
\begin{proposition}\label{prop:kol}
Let $\{X(t)\}_{t\in [0,T]}$ be a stochastic process with values in a Banach space $B$. If, for some $\alpha,p >0$ with $\alpha p>1$ we have that $X\in C^{\alpha}([0,T]; L^p(\Omega;B))$, then $X$ has a continuous modification $\tilde{X}$ with $\tilde{X}\in L^p(\Omega;C^{\beta}([0,T];B))$ for all $\beta<\alpha-\frac{1}{p}$. Furthermore, if $X(t_0)\in L^p(\Omega;B)$ for some $t_0\in [0,T]$, then $\tilde{X}\in L^p(\Omega;C([0,T];B))$.
\end{proposition}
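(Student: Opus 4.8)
The hypothesis says precisely that there is a constant $K$ with $\|X(t)-X(s)\|_{L^p(\Omega;B)}\le K|t-s|^{\alpha}$ for all $s,t\in[0,T]$, that is
\[
\E\|X(t)-X(s)\|_B^p\le K^p|t-s|^{\alpha p},\qquad \alpha p=1+\delta,\ \delta>0 .
\]
This is exactly the classical Kolmogorov--Chentsov moment condition, so the plan is to run the standard dyadic chaining argument, being careful that $B$ is merely a Banach space (no inner product is used) and to track the $L^p(\Omega)$-norm of the resulting H\"older seminorm rather than only its almost sure finiteness.

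First I would introduce the dyadic grids $D_n=\{kT2^{-n}:0\le k\le 2^n\}$ and set $D=\bigcup_{n\ge0} D_n$. For the maximal consecutive increment $M_n=\max_{0\le k<2^n}\|X((k+1)T2^{-n})-X(kT2^{-n})\|_B$, summing the moment bound over the $2^n$ pairs gives
\[
\E[M_n^p]\le\sum_{k=0}^{2^n-1}\E\|X((k+1)T2^{-n})-X(kT2^{-n})\|_B^p\le K^p T^{\alpha p}2^{n(1-\alpha p)},
\]
which decays geometrically since $\alpha p>1$. The deterministic heart of the argument is the chaining lemma: for $s,t\in D$ one telescopes along their dyadic expansions to bound $\|X(t)-X(s)\|_B$ by a tail $2\sum_{n\ge m}M_n$, where $m$ is the level with $2^{-(m+1)}T<|t-s|\le 2^{-m}T$. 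Standard bookkeeping then yields, for the H\"older seminorm over the grid,
\[
Y_\beta:=\sup_{\substack{s,t\in D\\ s\ne t}}\frac{\|X(t)-X(s)\|_B}{|t-s|^{\beta}}\le C(\beta,T)\sum_{n\ge 0}2^{n\beta}M_n .
\]

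Passing to $L^p(\Omega)$ by Minkowski's inequality and inserting $\|M_n\|_{L^p(\Omega)}\le K T^{\alpha}2^{n(1/p-\alpha)}$ gives
\[
\|Y_\beta\|_{L^p(\Omega)}\le C\sum_{n\ge 0}2^{n\beta}\|M_n\|_{L^p(\Omega)}\le C\sum_{n\ge 0}2^{n(\beta-\alpha+1/p)},
\]
and this geometric series converges precisely when $\beta<\alpha-\tfrac1p$. Hence $Y_\beta\in L^p(\Omega)$, in particular $Y_\beta<\infty$ almost surely, so on a set of full measure $t\mapsto X(t)$ is $\beta$-H\"older, and thus uniformly continuous, on the dense set $D$; it therefore extends pathwise to a continuous $\tilde X$ on $[0,T]$ whose $C^\beta$-seminorm equals $Y_\beta$, giving $\tilde X\in L^p(\Omega;C^\beta([0,T];B))$. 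That $\tilde X$ is a modification follows softly: fixing $t$ and taking $D\ni s_j\to t$, by construction $\tilde X(t)=\lim_j X(s_j)$ almost surely, while the $L^p$-continuity hypothesis forces $X(s_j)\to X(t)$ in $L^p(\Omega;B)$, so $\tilde X(t)=X(t)$ a.s.

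For the final assertion, if $X(t_0)\in L^p(\Omega;B)$ for some $t_0$ then $\sup_{t}\|\tilde X(t)\|_B\le\|\tilde X(t_0)\|_B+Y_\beta T^{\beta}$, and since $\tilde X(t_0)=X(t_0)\in L^p(\Omega;B)$ and $Y_\beta\in L^p(\Omega)$, the right-hand side lies in $L^p(\Omega)$, whence $\tilde X\in L^p(\Omega;C([0,T];B))$. The main obstacle is the chaining step and its conversion to an $L^p(\Omega)$ bound: one must organise the dyadic telescoping so that the random seminorm $Y_\beta$ is genuinely dominated by $\sum_n 2^{n\beta}M_n$, after which Minkowski turns the geometric decay of $\E[M_n^p]$ into the sharp threshold $\beta<\alpha-\tfrac1p$; everything downstream (the modification property and the sup-norm bound) is routine once $Y_\beta\in L^p(\Omega)$ is established.
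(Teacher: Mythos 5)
The paper itself contains no proof of this proposition: it is imported as a simplified version of \cite[Theorem 1.4.1]{Kunita}, so there is no internal argument to compare yours against. Your blind proof is correct, and it is essentially the standard Kolmogorov--Chentsov chaining argument that underlies the cited result. The hypothesis $X\in C^{\alpha}([0,T];L^p(\Omega;B))$ does say exactly that $\mathbb{E}\|X(t)-X(s)\|_B^p\le K^p|t-s|^{\alpha p}$ (the paper defines $C^{\alpha}$ through the H\"older seminorm); the telescoping bound $\|X(t)-X(s)\|_B\le 2\sum_{n\ge m}M_n$ along dyadic expansions is the standard chaining lemma and your bookkeeping constant checks out; Minkowski's inequality combined with $\|M_n\|_{L^p(\Omega)}\le KT^{\alpha}2^{n(1/p-\alpha)}$ gives the geometric series whose convergence is precisely the threshold $\beta<\alpha-\tfrac1p$; and both closing steps (the modification property and the sup-norm bound under the extra hypothesis $X(t_0)\in L^p(\Omega;B)$) are sound. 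Two small points worth tightening: the identification $\tilde X(t)=X(t)$ a.s.\ strictly requires extracting an a.s.-convergent subsequence from the $L^p$-convergent sequence $X(s_j)$, a routine step your wording glosses over; and Minkowski's inequality for the infinite sum needs $p\ge 1$ --- the statement formally permits $p<1$ when $\alpha>1$, but that case is degenerate (the process is then a.s.\ constant), and in the paper's applications $p\ge 2$ throughout. What your self-contained argument buys over the paper's citation is the explicit verification that nothing beyond the Banach structure of $B$ is used, and that the $C^{\beta}$-seminorm itself is controlled in $L^p(\Omega)$; this is exactly the level of generality in which the paper invokes the proposition (with $B=\dot H^{s}$ in Corollary \ref{cor:path} and Corollary \ref{cor:tcn}).
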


\subsection{Assumptions}\label{sc:ass}
We first state the assumptions on the convolution kernel $b$. Recall that a kernel $b$ is $k$\textit{-regular} if its Laplace transform $\hat{b}$ satisfies $$|\lambda|^j |\hat b^{(j)}(\lambda)|\le C|\hat b(\lambda)|$$ for all $\mathrm{Re}\lambda>0$ and all $0\le j\le k$ and some $C>0$ (see, \cite[Definition 3.3]{pruss}). It is \textit{sectorial} of angle less than $\pi/2$ if $\sup \{ | \mathrm{arg} \, \hat b(\lambda) |, \; \Re\lambda >0 \}<\pi/2$.
Finally, the kernel $b$ is called $k$\textit{-monotone} ($k\ge 2$) if $b$ is $(k-2)$-time continuously differentiable on $(0,\infty)$, $(-1)^nb^{(n)}(t)\ge 0$ for $t>0$ and $0\le n\le k-2$ and $(-1)^{k-2}b^{(k-2)}$ is nonincreasing and convex, see \cite[Definition 3.4]{pruss}.
\begin{ass}
	\label{ass:convKernel}
The kernel  $0\neq b\in L^1_{\loc}(\R_+)$ is of subexponential growth, 2-regular, sectorial of angle less than $\pi/2$ and the boundary function of its Laplace transform $g(k)=\lim_{\ep\to0}\hat b(\ep+ik)$ satisfies the following growth conditions.
\begin{enumerate}
\item $k\mapsto g(k)/(|k|+|g(k)|)\in L^p(\R)$ for some $1\le p<\infty$.
 \item There exists $C>0$ and $1<\rho<2$ such that for all $\mu>0$
\begin{equation}\label{weakboundhatbAss}
  \int _0^\infty \frac{|g(k)|} {(|k|+\mu|g(k)|^2}\,dk \le C/\mu
\end{equation}
and
\begin{equation}\label{strongboundhatbAss}
   \int _0^\infty \frac{|k^2 g'''(k)|+|k g''(k)|+|g'(k)|+1/\mu} {(|k|+\mu|g(k)|)^2}\,dk \le C/\mu^{1+1/\rho}.
   \end{equation}
\end{enumerate}

\end{ass}
\begin{remark}\label{rem:realb}
We would like to mention that if $b$ is $4$-monotone, $\lim_{t\to \infty}b(t)=0$, and
\begin{equation}\label{eq:b-smooth}
\limsup_{t\rightarrow 0,\infty} \frac{\frac 1t\int_0^t s b(s) \, \dd s}{\int_0^t -s \dot{b}(s) \, \dd s} < +\infty,
\end{equation}
then Assumption \ref{ass:convKernel} is satisfied  with
\begin{equation}\label{eq:sector}
\rho  := 1 + \frac{2}{\pi}\sup \{ | \mathrm{arg} \, \hat b(\lambda) |, \; \Re\lambda >0 \} \in (1,2),
\end{equation}
see Lemma \ref{lem:sd}.
%\begin{equation}\label{eq:sector}
%\rho  := 1 + \frac{2}{\pi}\sup \{ | \mathrm{arg} \, \widehat b(\lambda) |, \; \Re\lambda >0 \} \in (1,2).
%\end{equation}
An important example, as mentioned in the introduction is the kernel $b(t)=\frac{1}{\Gamma(\rho-1)}t^{\rho-2}e^{-\eta t}$, $1<\rho<2$ and $\eta\ge 0$. When $\eta=0$, then the corresponding equation \eqref{eq:problem} can be viewed as a fractional-in-time stochastic equation. Note however that, in general, the kernel does not have to be analytic, highly smooth or even positive. As an example, for $1<\rho<2$, consider the kernel with finite history defined by
\begin{equation}\label{eq:bfin}
b(t)=
\begin{cases}
(t^{(\rho-2)/3}-1)^3&\text{ for }0<t<1\\
0&\text{ for }t\geq 1.
\end{cases}
\end{equation}
Then $b$ is 4-monotone, $\lim_{t\to \infty}b(t)=0$, and \eqref{eq:b-smooth} is satisfied. Furthermore $\rho$ specified in \eqref{eq:sector} coincides with $\rho$ given in \eqref{eq:bfin}.

Another example is the function with Laplace transform
$$\hat b(\lambda)=\frac{1}{\lambda^{0.4}+0.4\left(\frac{1}{(\lambda+1)^5}-1\right)}.$$
One can check numerically that the Laplace transform satisfies Assumption \ref{ass:convKernel} but is not completely monotonic as its fourth derivative is not positive. Furthermore, $\rho$ in \eqref{strongboundhatbAss} is equal to $1.4$ which is smaller than the $\rho$ obtained via the sectorial formula \eqref{eq:sector}, around $\approx 1.874$.
\end{remark}

Having symmetric elliptic operator in mind we consider the following assumption on $A$.
\begin{ass}\label{ass:A}
The operator $A:\mathcal{D}(A)\subset H\to H$ is an unbounded, densely defined, linear, self adjoint operator with compact inverse.
\end{ass}
Next, we introduce fractional order spaces and norms.  It is well
known that our assumptions on $A$ imply the existence of a sequence of nondecreasing
positive real numbers $\{\lambda_k\}_{k\geq 1}$ and an orthonormal
basis $\{e_k\}_{k\geq 1}$ of $H$ such that
\begin{equation*}\label{eq:spectral}
Ae_k = \lambda_k e_k, \quad \lim_{k\rightarrow +\infty} \lambda_k = +\infty.
\end{equation*}
In a standard way we introduce the fractional powers $A^s$, $s
\in \mathbb{R}$, of $A$ as
\begin{equation*}\label{eq:fp}
A^s v=\sum_{k=1}^{\infty}\lambda_k^s(v,e_k)e_k,\quad
D(A^s)=\Big\{v\in H:\|A^sv\|^2=\sum_{k=1}^{\infty}\lambda_k^{2s}(v,e_k)^2<\infty\Big\}
\end{equation*}
and spaces $\dot{H}^\beta=D(A^{\beta/2})$ with inner product $(
u , v)_{\dot{H}^\beta}=( A^{\frac{\beta}{2}}u ,
A^{\frac{\beta}{2}}v)$ and induced norms
$\|v\|_{\dot{H}^\beta}=\norm{A^{\beta/2} v}$. More precisely, when $\beta<0$ we set $\dot{H}^\beta$ to be the completion of $H$ with respect to the norm $\|\cdot\|_{\dot{H}^\beta}$. The spaces $\dot{H}^\beta$ are Banach spaces and, for $\beta>0$, the space $\dot{H}^{-\beta}$ is isometrically isomorphic to the dual space of $\dot{H}^{\beta}$.
It is well-known that if $-A$ is the Dirichlet Laplacian on $H=L^2(\mathcal{O})$, where $\mathcal{O}$ is a bounded domain in $\mathbb{R}^d$ with smooth boundary, then for $0\le
\beta < 1/2$ we have that $\dot{H}^\beta=H^\beta$ and for $1/2<\beta\le 2$
 that $\dot{H}^\beta=\{u\in H^\beta:u|_{\partial \mathcal{O}}=0\}$,
where $H^\beta$ denotes the standard Sobolev space of order $\beta$.\\
With the fractional order spaces introduced we let $L^2_{0,r}$ denote the space of Hilbert-Schmidt operators $T:U_0\to \dot{H}^r$ endowed with its natural norm
$$
\|T\|_{L^0_{2,r}}^2=\sum_{k=1}^\infty\|Tf_k\|_{\dot{H}^r}^2,
$$
where $\{f_k\}$ is any orthonormal basis for $U_0$.\\
With the above preparation we make assumptions on the data analogous to those for parabolic equations \cite{JR,KrLa,P2001} taking into account the smoothing effect of the resolvent operator $S$ so that the strict Lipschitz assumptions can be relaxed. The degree of smoothing heavily relies of the convolution kernel $b$ and hence the appearance of the parameter $\rho$ from Assumption \ref{ass:convKernel}.
\begin{ass}\label{ass:id}
Let $\rho\in(1,2)$ as in Assumption \ref{ass:convKernel}, $r<1$ and $p\ge 2$. We suppose that the initial data $u_0$ is $\dot H^{r+\frac1\rho}$-valued $\mathcal{F}_0$-measurable with $u_0\in L^p(\Omega;\dot H^{r+\frac1\rho})$.
\end{ass}
For a discussion on the initial regularity, see Remark \ref{rem:asss}.
\begin{ass}
	\label{ass:main}
	Let $\rho\in(1,2)$ as in Assumption \ref{ass:convKernel} and $r<1$. Suppose that  $F: \dot{H}^{r-1+\frac1\rho}\to \dot{H}^{-1+r}$ and $G:\dot{H}^{r-1+\frac1\rho}\to L_2^0$ satisfy $G(\dot H^{r-1+\frac1\rho})\subset L_{2,r-1+\frac1\rho}^0$ and
	\begin{align}
		\|F(x)-F(y)\|_{\dot{H}^{-1+r}}&\leq C\|x-y\|_{\dot{H}^{r-1+\frac1\rho}},\label{eq:ass1}\\
		\|G(x)-G(y)\|_{L_2^0}&\leq C\|x-y\|_{\dot{H}^{r-1+\frac1\rho}},\label{eq:ass2}\\
		\|G(x)\|_{L_{2,r-1+\frac1\rho}^0}&\leq C(1+\|x\|_{\dot H^{r-1+\frac1\rho}})\label{eq:ass3}
	\end{align}
\end{ass}
Note that \eqref{eq:ass1} implies the linear growth bound
%$$
%\|F(x)-F(y)\|_{\dot{H}^{-1+r}}\leq C \|F(x)-F(y)\|_{\dot{H}^{-1+\tilde{t}}}\leq C\|x-y\|_H
%$$
%and
$$
\|F(x)\|_{\dot{H}^{-1+r}}\le C(1+\|x\|_{\dot{H}^{r-1+\frac1\rho}}),\quad x\in \dot{H}^{r-1+\frac1\rho}.
$$
\begin{remark}
If $F:H\to H$ and $G:H\to L_2^0$ are globally Lipschitz continuous, then one may choose
$r=1-\frac{1}{\rho}$ and then \eqref{eq:ass2} implies \eqref{eq:ass3}.
\end{remark}
For a discussion and for specific examples of the type of conditions appearing in  Assumption \ref{ass:main} we refer to \cite{JR}.

\subsection{Resolvent family}
Under the assumptions on $A$ and $b$ it follows from \cite[Corollary 1.2]{pruss} that there exists a strongly continuous family $\{S(t)\}_{t\ge 0}$ such that the function $u(t)=S(t)u_0$, $u_0\in H$, is the unique solution of
$$
u(t)+A\int_0^tB(t-s)u(s)\,\dd s=u_0,\quad t\ge 0,
$$
with $B(t)=\int_0^tb(s)\dd s$. In fact, \cite[Theorem 3.1]{pruss} even shows that $t\to u(t)=S(t)u_0$ is differentiable for $t>0$ and $u_0\in H$ and hence it is the unique solution of
$$
\dot{u}(t)+A\int_0^tb(t-s)u(s)\,\dd s=0,\quad t>0;\quad u(0)=u_0.
$$
Note that the resolvent family does not satisfy the semi-group property because of the presence of a memory term. Nevertheless, it can be written explicitly as
\begin{equation}\label{eq:sk}
S(t) v = \sum_{k=1}^{+\infty} s_k(t) (v,e_k) e_k,\quad, v\in H,
\end{equation}
\noindent where the functions $s_k(t)$ are the solutions of the ordinary differential equations
\begin{equation}\label{eq:skeq}
\dot{s_k}(t) + \lambda_k \int_0^t b(t-s) s_k(s)  \, \dd s = 0, \quad s_k(0)=1,
\end{equation}
with $\{\lambda_k,e_k\}$ being the eigenpairs of $A$.

\section{Main results}\label{sec:mr}
We begin with the definition of mild solution. It is the stochastic version of the deterministic variation of constants formula and is the analogue of the mild solution notion used for SPDEs without a memory term, see \cite[Chapter 7]{DPZ}.
\begin{definition}[Mild solution]
Let $r<1$. We call an $H^{r-1+\frac1\rho}$-valued process $\{u(t)\}_{t\in [0,T]}$ a mild solution of \eqref{eq:problem} if the map $\Phi_{u_0}$ given by
	\begin{align*}
		\Phi_{u_0}(u)(t):=S(t)u_{0}+\int\limits_{0}^tS(t-s)F(u(s))\,\dd
		s+\int\limits_0^tS(t-s)G(u(s))\,\dd W(s),~t\in [0,T],
	\end{align*}
 is well-defined and, for almost all $t\in [0,T]$, we have $\Phi_{u_0}(u)(t)=u(t)$, almost surely.
\end{definition}

The following result, which establishes the smoothing properties of the map $\Phi_{u_0}$, contains the central tools for proving the main result of this paper.
\begin{proposition}
	\label{prop:regularity}
Suppose that Assumptions \ref{ass:convKernel}-- \ref{ass:id} hold together with \eqref{eq:ass1} and \eqref{eq:ass3} in Assumption \ref{ass:main}.
	We set $\kappa=(r-s-1)\rho$.
	Let $T>0$, $p\geq 2$, $u_0\in L^p(\Omega;\dot H^{r+\frac1\rho})$ be $\dot H^{r+\frac1\rho}$-valued $\mathcal{F}_0$-measurable and let $\{u(t)\}_{t\in [0,T]}$ be a predictable process with $$u\in
	L^\infty([0,T];L^p(\Omega,\dot
		H^{r-1+1/\rho})).$$ Then,
	for $s<r-1+\frac2\rho$ and $T>0$, we have that
	\begin{equation}\label{eq:phi1}
		\|\Phi_{u_0}(u)\|_{C^{\min\{\frac12,\frac \kappa2+1\}}([0,T];L^p(\Omega;\dot
		H^{s}))}\leq
		C_T\left(1+\|u\|_{L^\infty([0,T];L^p(\Omega,\dot
		H^{r-1+1/\rho}))}\right).
	\end{equation}
	%\begin{equation}\label{eq:phi2}
%		\|\Phi_{u_0}(0)\|_{C^{\frac \kappa2+1}([0,T];L^p(\Omega;\dot
%		H^{s}))}\leq
%		C\left(\|u_0\|_{L^p(\Omega;\dot H^{r-1+\frac2\rho})}\right)
%	\end{equation}
	In particular,
	\begin{equation}\label{eq:phi3}
		\|\Phi_0(u)(t)\|_{L^p(\Omega;\dot H^s)}\leq
		C_Tt^{\min\{\frac12,\frac
		\kappa2+1\}}\left(1+\|u\|_{L^\infty([0,T];L^p(\Omega,\dot
		H^{r-1+1/\rho}))}\right),\quad t\in[0,T].
	\end{equation}
	%Moreover, for $s=r-1+\frac2\rho$ and
%	$\delta>
%	\frac{1+(r-1)\rho}2$, $T>0$
%	\begin{align*}
%		\|\Phi(u)\|_{C([0,T];L^p(\Omega;\dot
%		H^{r-1+\frac2\rho}))}\leq&
%		C\big(1+\|u_0\|_{L^p(\Omega;H^{r-1+\frac2\rho})}+\|u\|_{L^\infty([0,T];L^p(\Omega,\dot
%		H^{r-1+1/\rho}))}\\
%		&+\|u\|_{C^{\delta}([0,T];L^p(\Omega,H))}\big).
%	\end{align*}
\end{proposition}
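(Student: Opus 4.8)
The plan is to split $\Phi_{u_0}(u)(t)$ into its three constituents — the homogeneous term $S(t)u_0$, the deterministic convolution $\int_0^t S(t-\sigma)F(u(\sigma))\,\dd\sigma$ and the stochastic convolution $\int_0^t S(t-\sigma)G(u(\sigma))\,\dd W(\sigma)$ — and to bound each separately, both in the $L^p(\Omega;\dot H^s)$ norm (for \eqref{eq:phi3}, where $u_0=0$ so the first term drops out) and in the time-increment seminorm (for \eqref{eq:phi1}). The three ingredients are the It\^o isometry of Proposition~\ref{prop:ito}, the structural bounds \eqref{eq:ass1} and \eqref{eq:ass3} (which give $F(u(\sigma))\in\dot H^{r-1}$ and $G(u(\sigma))\in L^0_{2,r-1+\frac1\rho}$ with norms controlled by $1+\|u(\sigma)\|_{\dot H^{r-1+\frac1\rho}}$), and the Appendix smoothing estimates for $S$ and $\dot S$ — in particular \eqref{eq:estSDot} — which I will use in the operator form $\|S(\xi)\|_{\mathcal L(\dot H^a,\dot H^b)}\lesssim\xi^{-\rho(b-a)/2}$ and $\|\dot S(\xi)\|_{\mathcal L(\dot H^a,\dot H^b)}\lesssim\xi^{-\rho(b-a)/2-1}$, valid for $0\le b-a\le 2$ (and, for $b<a$, with a constant depending on the spectral gap $\lambda_1>0$). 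The role of the hypothesis $s<r-1+\tfrac2\rho$ is precisely to keep the resulting temporal exponents integrable: it is equivalent to $\kappa>-2$.

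For the norm bounds feeding \eqref{eq:phi3} I would estimate the drift by $\int_0^t\|S(t-\sigma)\|_{\mathcal L(\dot H^{r-1},\dot H^s)}\,\|F(u(\sigma))\|_{\dot H^{r-1}}\,\dd\sigma$, which produces the singular kernel $(t-\sigma)^{\kappa/2}$ and integrates to $t^{\kappa/2+1}$; and the stochastic term, via Proposition~\ref{prop:ito} and Minkowski's integral inequality in $L^p(\Omega)$, by $\bigl(\int_0^t\|S(t-\sigma)\|^2_{\mathcal L(\dot H^{r-1+1/\rho},\dot H^s)}\,\|G(u(\sigma))\|^2_{L^0_{2,r-1+1/\rho}}\,\dd\sigma\bigr)^{1/2}$, which produces $(t-\sigma)^{\kappa+1}$ and integrates to $t^{\kappa/2+1}$ when $s>r-1+\tfrac1\rho$, whereas in the complementary range $s\le r-1+\tfrac1\rho$ one uses the embedding $\dot H^{r-1+1/\rho}\hookrightarrow\dot H^s$ and the bounded $\dd\sigma$-integral to get the plain factor $t^{1/2}$. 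The smaller of these two stochastic exponents is exactly $\min\{\tfrac12,\tfrac\kappa2+1\}$, and since $t\le T$ the (always larger) drift exponent is absorbed into $C_T$.

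The Hölder bounds for \eqref{eq:phi1} follow the same pattern, writing each increment between $\tau<t$ as a new-mass piece $\int_\tau^t S(t-\sigma)(\cdots)$ plus an old-mass piece $\int_0^\tau[S(t-\sigma)-S(\tau-\sigma)](\cdots)$. The new-mass pieces are estimated exactly as above and yield $(t-\tau)^{\kappa/2+1}$ (drift) and $(t-\tau)^{\min\{1/2,\kappa/2+1\}}$ (stochastic, the $\tfrac12$ being the It\^o square-root ceiling). For the old-mass pieces I would write $S(t-\sigma)-S(\tau-\sigma)=\int_{\tau-\sigma}^{t-\sigma}\dot S(\xi)\,\dd\xi$, insert \eqref{eq:estSDot}, and reduce everything, after the substitution $\eta=\tau-\sigma$, to controlling $\int_0^\tau\|S(\eta+h)-S(\eta)\|^2_{\mathcal L}\,\dd\eta$ with $h=t-\tau$. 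The drift old-mass piece is strictly easier because it carries no $L^2$-in-time square root, so its natural exponent is at least $\tfrac12$ and never binding.

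The main obstacle is this last step for the stochastic convolution. Inserting $\|\dot S(\xi)\|_{\mathcal L(\dot H^{r-1+1/\rho},\dot H^s)}\lesssim\xi^{\kappa/2-1/2}$ into $\int_{\tau-\sigma}^{t-\sigma}\dot S(\xi)\,\dd\xi$ and integrating gives the sharp increment bound $\|S(\eta+h)-S(\eta)\|\lesssim\min\{\eta^{\kappa/2+1/2},\,h\,\eta^{\kappa/2-1/2}\}$. In the smoothing regime $s>r-1+\tfrac1\rho$ (so $\kappa<-1$) I would split the $\eta$-integral at the crossover $\eta=h$: the piece $\int_0^h\eta^{\kappa+1}\,\dd\eta$ and the piece $\int_h^\tau h^2\eta^{\kappa-1}\,\dd\eta$ both equal a constant times $h^{\kappa+2}$ (here $\kappa>-2$ is exactly what makes the first converge), and the square root reproduces the required $(t-\tau)^{\kappa/2+1}$ with no endpoint loss. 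In the complementary regime $s\le r-1+\tfrac1\rho$ (so $\kappa\ge-1$) I would instead pass through the embedding to the critical level, where the binding estimate is $\|\dot S(\xi)\|_{\mathcal L(\dot H^{r-1+1/\rho})}\lesssim\xi^{-1}$, whence $\|S(\eta+h)-S(\eta)\|\lesssim\min\{1,\log(1+h/\eta)\}$ and $\int_0^\tau\min\{1,\log(1+h/\eta)\}^2\,\dd\eta\lesssim h$; the resulting $h^{1/2}$ is exactly where the ceiling $\tfrac12$ in $\min\{\tfrac12,\tfrac\kappa2+1\}$ originates. Throughout, one checks using $\kappa>-2$ and the spectral gap that every invocation of the Appendix estimates stays within its range of validity, and collects the $\sigma$- and $\eta$-integrals in $L^p(\Omega)$ to land in the stated norms.
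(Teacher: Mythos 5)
Your treatment of the two convolution terms is sound and is essentially the paper's own argument: the same new-mass/old-mass splitting of increments, the It\^o isometry from Proposition \ref{prop:ito} combined with \eqref{eq:ass1} and \eqref{eq:ass3}, and increments of $S$ written as $\int \dot S(\xi)\,\dd\xi$ and controlled by \eqref{eq:estS} and \eqref{eq:estSDot}. Your crossover splitting at $\eta=h$, and the logarithmic bound $\min\{1,\log(1+h/\eta)\}$ in the regime $\kappa\ge -1$, is a legitimate variant of the paper's route, which instead worsens the integrand exponent to $\min\{-2,\kappa-1\}$ (resp. $\min\{-\frac32,\frac\kappa2-1\}$) so as to invoke the elementary Lemma \ref{lem:kappa} and land exactly on $h^{\min\{\frac12,\frac\kappa2+1\}}$; both give the stated H\"older exponent.

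The genuine gap is the homogeneous term. You announce the three-way split including $S(t)u_0$, but you never estimate the increment $(S(t+h)-S(t))u_0$ in the H\"older seminorm, and with the toolkit you set up it cannot be done. Since $s<r-1+\frac2\rho<r+\frac1\rho$, this term falls in your ``$b<a$'' case, where you claim $\|\dot S(\xi)\|_{\mathcal{L}(\dot H^a,\dot H^b)}\lesssim \xi^{-\rho(b-a)/2-1}$ with a constant depending on the spectral gap. That claim is false under Assumption \ref{ass:convKernel}: boundedness of negative powers of $A$ yields no gain in the time exponent, only $\|\dot S(\xi)\|_{\mathcal{L}(\dot H^a,\dot H^b)}\lesssim \xi^{-1}$, and then $\|(S(t+h)-S(t))u_0\|_{\dot H^s}\lesssim \log(1+h/t)\,\|u_0\|_{\dot H^{r+\frac1\rho}}$, which is not $O(h^{\min\{\frac12,\frac\kappa2+1\}})$ uniformly as $t\downarrow 0$. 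The actual mechanism that produces a positive power of $\xi$ against a negative power of $A$ is not the spectral gap but the local integrability of the kernel: estimate \eqref{eq:estSDot1}, $\|A^{-\theta}\dot S(\xi)\|\le C\|b\|_{L^1(0,\xi)}^{\theta}\xi^{\theta-1}$, which is exactly what the paper uses for its term $I_1$, and which is the reason the initial data is assumed in $\dot H^{r+\frac1\rho}$ rather than in the solution space $\dot H^{r-1+\frac2\rho}$ (the stronger exponent $\xi^{\rho\theta-1}$ of \eqref{eq:estSDot2} is available only under the additional hypothesis \eqref{ass:assymp}, cf.\ Remark \ref{rem:asss}). A minor further correction: the validity range of \eqref{eq:estS}, \eqref{eq:estSDot} is $0\le b-a\le \frac2\rho$, not $0\le b-a\le 2$; your applications to the drift and stochastic convolutions stay inside $[0,\frac2\rho)$, so this slip is harmless there, but the range as you state it is wrong.
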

\begin{proof}
	We show \eqref{eq:phi1} first. Let $0\le t<t+h\le T$. We first decompose the increments of $\Phi_{u_0}$ as
	\begin{align*}
		\Phi_{u_0}(u)(t+h)-\Phi_{u_0}(u)(t)
		=&\left(S(t+h)-S(t)\right)u_0
		+\int\limits_{t}^{t+h}S(t+h-\sigma)F(u(\sigma))\,\dd \sigma\\
		&+\int\limits_{0}^{t}\left(S(t+h-\sigma)-S(t-\sigma)\right)F(u(\sigma))\,\dd
		\sigma\\
		&+\int\limits_{t}^{t+h}S(t+h-\sigma)G(u(\sigma))\,\dd W(\sigma)\\
		&+\int\limits_0^{t}\left(S(t+h-\sigma)-S(t-\sigma)\right)G(u(\sigma))\,\dd W(\sigma)\\
		=:&I_1+I_2+I_3+I_4+I_5.
	\end{align*}
	To bound $I_1$, we use \eqref{eq:estSDot1} and Assumption \ref{ass:id} to conclude that
\begin{equation}\label{eq:I1}	
\begin{aligned}
		\|I_1\|_{L^p(\Omega;\dot H^s)}
		=&\left\|\int\limits_{t}^{t+h}\dot S(\eta) u_0\,\dd
		\eta\right\|_{L^p(\Omega;\dot H^s)}
		=\left\|\int\limits_{t}^{t+h}A^{\frac {s-r-1/\rho}2}\dot{S}(\eta) A^{\frac{r+1/\rho}2}\,u_0\,\dd
		\eta\right\|_{L^p(\Omega;H)}\\
		\leq&
		C\int\limits_{t}^{t+h}\eta^{\min\{-\frac12,\frac{r+\frac1\rho-s}2-1\}}\,\dd\eta \,\|u_0\|_{L^p(\Omega;\dot
		H^{r+\frac1\rho})}\\
		\leq&
		C\int\limits_{t}^{t+h}\eta^{\min\{-\frac12,\frac{\kappa}2\}}\,\dd\eta \,\|u_0\|_{L^p(\Omega;\dot
		H^{r+\frac1\rho})}\\
&\le Ch^{\min\{\frac12,\frac{\kappa}2+1\}}\|u_0\|_{L^p(\Omega;\dot
		H^{r+\frac1\rho})},
	\end{aligned}
\end{equation}
as $\frac{\kappa}2<\frac{r+\frac1\rho-s}2-1<-\frac12$ if and only if $s>r+\frac1\rho -1$.
	The term $I_2$ can be estimated using \eqref{eq:estS} and \eqref{eq:ass1} from Assumption \ref{ass:main} as
	\begin{align*}
		\left\|I_2\right\|_{L^p(\Omega;\dot H^s)}
		\leq &
		\left\|\int\limits_{t}^{t+h}A^{\frac{s+1-r}2}S(t+h-\sigma)
		A^{\frac{r-1}2}F(u(\sigma))\,\dd
		\sigma\right\|_{L^p(\Omega;H)}\\
		\leq &
		C\int\limits_{t}^{t+h}(t+h-\sigma)^{\min\{0,\frac\kappa2\}}\,\dd \sigma
		\left(1+\|u\|_{L^\infty(0,T;L^p(\Omega;\dot{H}^{r-1+\frac1\rho}))}\right)\\
		\leq & Ch^{
		\min\{1,\frac{\kappa}{2}+1\}}\left(1+\|u\|_{L^\infty(0,T;L^p(\Omega;\dot{H}^{r-1+\frac1\rho}))}\right).
\end{align*}
For $I_3$, by \eqref{eq:estSDot}, \eqref{eq:kap1} and \eqref{eq:ass1} from Assumption \ref{ass:main} we have that
\begin{align*}
		\left\|I_3\right\|_{L^p(\Omega;\dot H^s)}
		=&\left\|\int\limits_0^{t}\int\limits_{t}^{t+h}A^{\frac {s+1-r}2}\dot
		S(\eta-\sigma)\,\dd \eta
		A^{\frac{r-1}2}F(u(\sigma))\,\dd \sigma
		\right\|_{L^p(\Omega;H)}\\
		\leq&
		C\int\limits_{t}^{t+h}\int\limits_0^{t}(\eta-\sigma)^{\min\{-1,\frac{\kappa}{2}-1\}}\,\dd
		\sigma\,\dd\eta
		\left(1+\|u\|_{L^\infty(0,T;L^p(\Omega;\dot{H}^{r-1+\frac1\rho}))}\right)\\
		\leq&
		C\int\limits_{t}^{t+h}\int\limits_0^{t}(\eta-\sigma)^{\min\{-\frac32,\frac{\kappa}{2}-1\}}\,\dd
		\sigma\,\dd\eta
		\left(1+\|u\|_{L^\infty(0,T;L^p(\Omega;\dot{H}^{r-1+\frac1\rho}))}\right)\\
		\leq&
		Ch^{\min\{\frac12,\frac\kappa2+1\}}\left(1+\|u\|_{L^\infty([0,T];L^p(\Omega;\dot{H}^{r-1+\frac1\rho}))}\right).
	\end{align*}
	Similarly, by \eqref{eq:estS}, Proposition \ref{prop:ito} and \eqref{eq:ass3} from Assumption \ref{ass:main}, we obtain
	\begin{align*}
		\|I_4\|_{L^p(\Omega;\dot H^s)}
 		\leq &\left\|\int\limits_{t}^{t+h}A^{\frac{s-r+1-1/\rho}2}S(t+h-\sigma)A^{\frac{r-1+1/\rho}2}G(u(\sigma))\,\dd
 		W(\sigma)\right\|_{L^{p}(\Omega;H)}\\
 		\leq &
 		C\left\|\left(\int\limits_{t}^{t+h}(t+h-\sigma)^{\min\{0,{\kappa+1}\}}\|A^{\frac{r-1+1/\rho}2}
 		G(u(\sigma))\|^2_{L_2^0}\,\dd \sigma
 		\right)^{\frac 12}\right\|_{L^p(\Omega;\mathbb{R})}\\
 		\leq &C
		h^{\min\{\frac12,\frac\kappa2+1\}}\left(1+\|u\|_{L^\infty([0,T];L^p(\Omega;\dot
 		H^{r-1+1/\rho}))}\right).
	\end{align*}
	To bound $I_5$ we use \eqref{eq:estSDot}, \eqref{eq:kap2} together with Proposition \ref{prop:ito} and \eqref{eq:ass3} from Assumption \ref{ass:main} to get
	\begin{align*}
		\|I_5\|_{L^p(\Omega;\dot H^s)}
		= &
		\left\|\int\limits_0^{t}\int\limits_{t}^{t+h}\dot S(\eta-\sigma)\,\dd
		\eta \,G(u(\sigma))\,\dd
		W(\sigma)\right\|_{L^p(\Omega;\dot H^s)}\\
		\leq &
		\int\limits_{t}^{t+h}\left\|\int\limits_0^{t}A^{\frac{s-r+1-1/\rho}2}\dot S(\eta-\sigma)
		A^{\frac{r-1+1/\rho}2}G(u(\sigma))\,\dd
		W(\sigma)\right\|_{L^p(\Omega;\dot H^s)}\,\dd\eta\\
		\leq &
		C\int\limits_{t}^{t+h}\left\|\left(\int\limits_0^{t}(\eta-\sigma)^{\min\{-1,\kappa-1\}}\|
		G(u(\sigma))\|^2_{L_{2,r-1+1/\rho}^0}\,\dd \sigma
		\right)^{\frac 12}\right\|_{L^p(\Omega;\mathbb{R})}\,\dd\eta\\
        \leq &
		C\int\limits_{t}^{t+h}\left\|\left(\int\limits_0^{t}(\eta-\sigma)^{\min\{-2,\kappa-1\}}\|
		G(u(\sigma))\|^2_{L_{2,r-1+1/\rho}^0}\,\dd \sigma
		\right)^{\frac 12}\right\|_{L^p(\Omega;\mathbb{R})}\dd\,\eta\\
		\leq &
		Ch^{\min\{\frac12,\frac\kappa2+1\}}
		\left(1+\|u\|_{L^{\infty}([0,T];L^p(\Omega;\dot
		H^{r-1+1/\rho}))}\right).
	\end{align*}
This finishes the proof of \eqref{eq:phi1}.
%Next, \eqref{eq:phi2} follows from \eqref{eq:estIntS} and \eqref{eq:estIntS2} in view of Assumption \ref{ass:main}.
Finally \eqref{eq:phi3} follows from \eqref{eq:phi1} noting that $\Phi_0(u)(0)=0$.

\end{proof}
The next theorem constitutes the main result of the paper establishing the existence and uniqueness of smooth mild solutions of \eqref{eq:problem}.
 \begin{theorem}\label{thm:eu}
    Suppose that Assumptions \ref{ass:convKernel}--\ref{ass:main} hold.
	Let $p\geq 2$ and $T>0$.
	Then, there exists a unique mild solution
	\begin{align*}
		u\in C^{\min\{\frac12,\frac\kappa2+1\}}([0,T];L^p(\Omega;\dot H^s))
	\end{align*}
	of \eqref{eq:problem}
	for all $s <r-1+\frac2\rho$ and $\kappa=(r-s-1)\rho$.
\end{theorem}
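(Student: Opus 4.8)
The plan is to realize the mild solution as the unique fixed point of the map $\Phi_{u_0}$ on the Banach space $Z_T$ of predictable processes $u\colon[0,T]\to L^p(\Omega;\dot H^{r-1+\frac1\rho})$ that are continuous in $t$, normed by $\|u\|_{Z_T}=\sup_{t\le T}\|u(t)\|_{L^p(\Omega;\dot H^{r-1+\frac1\rho})}$, and then to bootstrap its regularity. First I would check that $\Phi_{u_0}$ maps $Z_T$ into itself: applying Proposition \ref{prop:regularity} with $s=r-1+\frac1\rho$ (which is admissible since $r-1+\frac1\rho<r-1+\frac2\rho$, and yields $\kappa=-1$, hence H\"older exponent $\frac12$) shows $\Phi_{u_0}(u)\in C^{1/2}([0,T];L^p(\Omega;\dot H^{r-1+\frac1\rho}))$, while $\Phi_{u_0}(u)(0)=u_0\in L^p(\Omega;\dot H^{r-1+\frac1\rho})$ because $\dot H^{r+\frac1\rho}\hookrightarrow\dot H^{r-1+\frac1\rho}$; thus the image is continuous up to $t=0$, and predictability is preserved by the deterministic and stochastic convolutions.

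The core estimate is the contraction bound for the difference $\Phi_{u_0}(u)(t)-\Phi_{u_0}(v)(t)$, in which the term $S(t)u_0$ cancels and only the $F$- and $G$-contributions remain. For the deterministic convolution I would write $A^{\frac{r-1+1/\rho}2}S(t-\sigma)=A^{\frac{1/\rho}2}S(t-\sigma)\,A^{\frac{r-1}2}$ and use \eqref{eq:estS} in the form $\|A^{\gamma/2}S(\tau)\|_{\mathcal{L}(H)}\lesssim\tau^{\min\{0,-\gamma\rho/2\}}$ with $\gamma=\frac1\rho$ (producing the integrable singularity $(t-\sigma)^{-1/2}$) together with the Lipschitz bound \eqref{eq:ass1}. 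For the stochastic convolution I would combine the It\^o isometry (Proposition \ref{prop:ito}) with \eqref{eq:estS} (now $\gamma=r-1+\frac1\rho$, so the squared integrand carries $(t-\sigma)^{-\gamma\rho}$ with $\gamma\rho=(r-1)\rho+1<1$ precisely because $r<1$) and \eqref{eq:ass2}, and then pull the $L^p(\Omega)$-norm inside the time integral by Minkowski's inequality. Setting $N(t)=\|u(t)-v(t)\|_{L^p(\Omega;\dot H^{r-1+1/\rho})}^2$ and applying Cauchy--Schwarz to the deterministic term, both contributions combine into a scalar Volterra inequality
\[
N_{\Phi}(t)\le C\int_0^t\tilde k(t-\sigma)\,N(\sigma)\,\dd\sigma,
\]
with an integrable, power-singular kernel $\tilde k$.

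Since the resolvent family $S$ has no semigroup property, I cannot solve on a short interval and restart; the global statement instead follows by iterating this Volterra inequality. The iterated convolution kernels satisfy $\|\tilde k^{*n}\|_{L^1[0,T]}\le C^nT^{n\beta}/\Gamma(n\beta+1)\to0$ for some $\beta>0$, so a sufficiently high power $\Phi_{u_0}^n$ is a strict contraction on $Z_T$; the contraction principle applied to that iterate (Weissinger's fixed point theorem) then yields a unique fixed point $u\in Z_T$, which is the (unique) mild solution, the same inequality giving uniqueness within the class.

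Finally, the asserted regularity is obtained by a single bootstrapping step: since $u=\Phi_{u_0}(u)$ with $u\in L^\infty([0,T];L^p(\Omega;\dot H^{r-1+1/\rho}))$, Proposition \ref{prop:regularity} applies for every $s<r-1+\frac2\rho$ and delivers $u\in C^{\min\{\frac12,\frac\kappa2+1\}}([0,T];L^p(\Omega;\dot H^s))$ with the stated bound, $\kappa=(r-s-1)\rho$. I expect the main obstacle to be the global-in-time well-posedness, namely organizing the stochastic convolution estimate (Minkowski together with the It\^o isometry) into the scalar Volterra form and running the iterated-contraction argument in the absence of the semigroup property; the passage to high regularity is then immediate, as the genuinely delicate smoothing of $\dot S$ recorded in the Appendix has already been absorbed into Proposition \ref{prop:regularity}.
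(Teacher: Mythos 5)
Your proposal is correct, and its overall architecture matches the paper's: a fixed point argument for $\Phi_{u_0}$ in the low-regularity space $s=r-1+\frac1\rho$ (where indeed $\kappa=-1$ and $s\rho=(r-1)\rho+1<1$, exactly the computations you make), followed by a single bootstrapping step via Proposition \ref{prop:regularity} -- the paper's Step 2 is verbatim your last step. Where you genuinely diverge is the mechanism for producing the contraction. The paper does not iterate the map: it introduces the exponentially weighted (Bielecki) norm $\|u\|_{X_{T,\alpha}}=\sup_{t\in[0,T]}e^{-\alpha t}\|u(t)\|_{L^p(\Omega;\dot H^s)}$ on $X_T=\{u\in L^\infty([0,T];L^p(\Omega;\dot H^s)):u\text{ predictable}\}$, splits $\Phi_{u_0}(u)-\Phi_{u_0}(v)$ into the $F$- and $G$-convolutions exactly as you do, bounds them with \eqref{eq:estS}, \eqref{eq:ass1}, \eqref{eq:ass2} and Proposition \ref{prop:ito}, and then uses H\"older's inequality to extract factors $\bigl(\frac{1}{\alpha\tilde p}\bigr)^{1/\tilde p}$, $\bigl(\frac{1}{\alpha\bar p}\bigr)^{1/\bar p}$ that can be made small by taking $\alpha$ large, so that $\Phi_{u_0}$ itself is a strict contraction and Banach's theorem applies in one step. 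Your route instead packages the same kernel estimates into a scalar Volterra inequality for $N(t)=\|u(t)-v(t)\|^2_{L^p(\Omega;\dot H^{r-1+1/\rho})}$ (the Cauchy--Schwarz trick to square the deterministic term, and Minkowski's integral inequality after Proposition \ref{prop:ito}, both work since $p\ge2$ and the exponents $\frac12$ and $\max\{0,s\rho\}$ are $<1$) and then exploits the factorial decay $\|\tilde k^{*n}\|_{L^1[0,T]}\le C^nT^{n\beta}/\Gamma(n\beta+1)$ so that some iterate $\Phi_{u_0}^n$ contracts; both devices correctly address the same obstruction you identify, namely that the lack of a semigroup property forbids solving on short intervals and restarting. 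The weighted norm buys a shorter, one-step argument; your iterated-kernel/Weissinger argument is slightly longer but more robust (it needs no freedom to tune a parameter in the norm and hands you the uniqueness statement directly through the generalized Gronwall inequality). One small mismatch of no consequence: you take $Z_T$ to consist of \emph{continuous} processes, while the paper only requires $L^\infty$ in time; since the image of $\Phi_{u_0}$ is $C^{1/2}$ in time by Proposition \ref{prop:regularity}, both choices are self-consistent complete spaces.
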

\begin{proof}
   {\em Step 1: existence and uniqueness.} Let first $s:=r-1+1/\rho$. For $T>0$ we define	
	$$X_T:=\{u\in L^{\infty}([0,T];L^p(\Omega;\dot H^{s})):u\text{ is predictable}\}.$$
Since $\Phi_{u_0}(u)(t)=\Phi_{0}(u)(t)+S(t)u_0$ we have, by \eqref{eq:phi3} of Proposition \ref{prop:regularity}, that $\Phi_{u_0}(u)\in X_T$ if $u\in X_T$.	
Next, for some $\alpha>0$ we introduce an equivalent norm on $X_T$ by
$$
\|u\|_{X_{T,\alpha}}:=\sup_{t\in [0,T]}\left(e^{-\alpha t}\|u(t)\|_{L^p(\Omega:\dot{H}^s)}\right).
$$
We will show that $\Phi_{u_0}$ is a contraction in $X_T$ with respect to the norm $\|\cdot\|_{X_{T,\alpha}}$
	%\begin{align*}
%		X_{R,T}:=\{u\in X_T: \|u-\Phi_{u_0}(\Phi_{u_0}(0))\|_{X_T}\leq R\},
%	\end{align*}
	for	suitable $\alpha>0$; that is,  we show that there is $\alpha>0$ such that
	\begin{align*}
	\|\Phi_{u_0}(u)-\Phi_{u_0}(v)\|_{X_{T,\alpha}}\leq K\| u-v\|_{X_{T,\alpha}},\quad K<1.
	\end{align*}
	In order to estimate the norm of the difference,
	we write
	\begin{align*}
		\Phi_{u_0}(u)(t)-\Phi_{u_0}(v)(t)
		=&
		\int\limits_{0}^{t}S(t-\sigma)\left(F(u(\sigma))-F(v(\sigma))\right)\dd \sigma\\
		&+\int\limits_{0}^{t}S(t-\sigma)\left(G(u(\sigma))-G(v(\sigma))\right)\dd
		W(\sigma)\\
		=:&I_1+I_2.
	\end{align*}
	
Let $1<\tilde{q}<2$ fixed and let $\tilde{p}$ such that $\frac{1}{\tilde{p}}+\frac{1}{\tilde{q}}=1$. Then, using
	Assumption~\ref{ass:main} and \eqref{eq:estS} from Lemma~\ref{lem:estSolOp} together with H\"older's inequality we have
	\begin{align*}
		&\|I_1\|_{X_{T,\alpha}}\\
		&\leq\sup\limits_{t\in[0,T]}
		\|\int\limits_{0}^{t}e^{-\alpha(t-\sigma)}e^{-\alpha \sigma}A^{\frac
		{1}{2\rho}}S(t-\sigma)A^{\frac{r-1}2}\left(F(u(\sigma))-F(v(\sigma))\right)\,\dd
		\sigma\|_{L^p(\Omega; H)}\\
		&\leq \sup\limits_{t\in[0,T]}C \int\limits_0^te^{-\alpha(t-\sigma)} (t-\sigma)^{-\frac 12}\,\dd
		\sigma \,\| u-v\|_{X_{T,\alpha}}	\leq C\left(\frac{1}{\alpha\tilde{p}}\right)^{\frac{1}{\tilde p}}T^{-\frac12+\frac{1}{\tilde q}}\| u-v\|_{X_{T,\alpha}}.
	\end{align*}
Similarly, let $\bar{q}> 1$ such that $\bar{q}s\rho<1$. This is possible as, since $r<1$, we have $s\rho=r\rho-\rho+1<1$. Let $\bar{q}$ such that $\frac{1}{\bar{p}}+\frac{1}{\bar{q}}=1$. Then, using again
	Assumption~\ref{ass:main} and \eqref{eq:estS} from Lemma~\ref{lem:estSolOp} together with H\"older's inequality, we have
	\begin{align*}
		&\|I_2\|_{X_{T,\alpha}}\\
		&\leq\sup\limits_{t\in[0,T]}
		\left\|\left(\int\limits_0^{t}e^{-\alpha(t-\sigma)}e^{-\alpha \sigma}\|A^{\frac
		{s}2}S(t-\sigma)
		\left(G(u(\sigma))-G(v(\sigma))\right)\|_{L_2^0}^2\,\dd
		\sigma\right)^{\frac12}\right\|_{L^p(\Omega)}\\
		&\leq C\sup\limits_{t\in[0,T]}
		\left(\int\limits_0^{t}e^{-\alpha(t-\sigma)}(t-\sigma)^{\min\{0,- s\rho\}}
		\,\dd\sigma
		\right)^{\frac12}\|u-v\|_{X_{T,\alpha}}\\
		&\leq C\left(\frac{1}{\alpha\bar{p}}\right)^{\frac{1}{\bar p}}T^{\min\{\frac12,\frac{-\bar{q}s\rho+1}{2}\}}
		\|u-v\|_{X_{T,\alpha}}.
	\end{align*}
	%This proves Step 1.\\
%	{\em Step 2:} $\|\Phi_{u_0}(\Phi_{u_0}(0))-\Phi_{u_0}(0)\|_{X_T}\leq
%	C\sqrt{T}$, $T>0$.\\
%	This follows immediately from
%	$\Phi_{u_0}(\Phi_{u_0}(0))-\Phi_{u_0}(0)=\Phi_0(\Phi_{u_0}(0))$ and
%	Proposition~\ref{prop:regularity}.\\
%	{\em Step 3:} There exists $R_0,T_0>0$ such that
%	$\Phi_{u_0}:X_{R,T}\to X_{R,T}$, $R\in(0,R_0)$, $T\in(0,T_0)$.\\
%	By Step 1 and Step 2, we obtain
%	\begin{align*}
%		\|\Phi_{u_0}(u)-\Phi_{u_0}(\Phi_{u_0}(0))\|_{X_T}
%		\leq &C T^{\alpha}
%		\|u-\Phi_{u_0}(0)\|_{X_T}\\
%		\leq &C T^{\alpha}\left(
%		\|u-\Phi_{u_0}(\Phi_{u_0}(0))\|_{X_T}
%		+\|\Phi_{u_0}(\Phi_{u_0}(0))-\Phi_{u_0}(0)\|_{X_T}\right)\\
%		\leq &
%		C T^{\alpha}\left(R+\sqrt{T}\right),\quad u\in X_{R,T},\ t>0.
%	\end{align*}
%	Step 1 and Step 3 together with
Thus, choosing $\alpha>0$ large enough it follows that $\Phi_{u_0}$ is a contraction with respect to the norm $\|\cdot\|_{X_{T,\alpha}}$ and hence, by
Banach's fixed point theorem, there is unique solution $u\in X_T$.\\
%{\em Step 2: global existence.} We show an apriori, uniform bound on the $X_T$-norm of $u$. By a calculation analogous to the previous one, using Assumption \ref{ass:main} and Lemma \ref{lem:estSolOp}, we have that
%\begin{align*}
%\|u(t)\|_{L^p(\Omega; \dot{H}^s)}\le \|u_0\|_{L^p(\Omega; \dot{H}^s)}+C\int_0^t(t-\sigma)^{-\frac12}(1+\|u(\sigma)\|_{L^p(\Omega; H)})\,\dd \sigma\\
%\quad +\int_0^t(1+\|u(\sigma)\|_{L^p(\Omega, \dot{H}^s)})\,\dd \sigma\\
%\le C(t+t^{\frac12})+C\int_0^t((t-\sigma)^{-\frac12}+1)\|u(\sigma)\|_{L^p(\Omega; \dot{H}^s)}\,\dd \sigma.
%\end{align*}
%Hence, using a generalized Gronwall's inequality, see \cite[Lemma 7.1.1]{Henry} or \cite{Stig},  we conclude that for all $T>0$, there is $C_T>0$ such that $\|u\|_{X_T}\le C_T$. Hence, the solution obtained in Step 1, exists on $[0,T]$ for all $T>0$.\\
%{\em Step 3: global uniqueness.} If $u,v\in X_T$ are two mild solutions for some $T>0$, then $u(t)=v(t)$ for all $t\in [0,T]$ in $H$ and hence in $\dot{H}^s$. Indeed, a similar calculation as in Step 1, using Assumption \ref{ass:main} and Lemma \ref{lem:estSolOp}, we have that
%\begin{align*}
%&\|u(t)-v(t)\|^2_{L^2(\Omega;H)}\\
%&\quad\le C\int_0^t(t-\sigma)^{(\tilde{r}-1)\rho}\|u(\sigma)-v(\sigma)\|^2_{L^2(\Omega;H)}\,\dd \sigma
%+C\int_0^t\|u(\sigma)-v(\sigma)\|^2_{L^2(\Omega;H)}\,\dd \sigma.
%\end{align*}
%Hence, as $(\tilde{r}-1)\rho>-1$ or, equivalently, $\tilde{r}>1-\frac{1}{\rho}$, again a generalized Gronwall's inequality shows that $u(t)-v(t)=0$ in $H$ for %all $t\in [0,T]$.\\
{\em Step 2: regularity.} Finally, the smoothing estimate \eqref{eq:phi1} of Proposition~\ref{prop:regularity} yields $$u\in C^{\min\{\frac12,\frac\kappa2+1\}}([0,T];L^p(\Omega;\dot H^s))$$ for all $s<r-1+\frac2\rho$ and $\kappa=(r-s-1)\rho$.
\end{proof}
\begin{remark}
The mild solution in Theorem \ref{thm:eu} is only unique as an element of the space $X_T$.
\end{remark}
\begin{remark}
The regularity obtained in Theorem \ref{thm:eu} is consistent with the results form \cite{JR,KrLa} in the sense that when $b(t)=\frac{1}{\Gamma(\rho-1)}t^{\rho-2}$, then in the limit $\rho\to 1$ we recover the results in memoryless case.
\end{remark}
\begin{corollary}[Pathwise regularity]\label{cor:path}
Suppose that Assumptions \ref{ass:convKernel}--\ref{ass:main} hold.
Let $s<r-1+\frac2\rho$, $\kappa=(r-s-1)\rho$ and $p> 2$ is such that  $\min\{\frac12,\frac\kappa2+1\}-\frac{1}{p}>0$. Then the unique mild solution $u$ of \eqref{eq:problem} has a continuous modification $\tilde{u}$ with  $\tilde{u}\in L^p(\Omega;C^{\beta}([0,T];\dot{H}^s))$ for all $\beta<\min\{\frac12,\frac\kappa2+1\}-\frac{1}{p}$. In particular, $\tilde{u}\in L^p(\Omega;C([0,T];\dot{H}^s))$.
\end{corollary}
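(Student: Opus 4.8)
The plan is to derive the statement directly from the temporal regularity already established in Theorem \ref{thm:eu} by invoking the version of Kolmogorov's continuity theorem recorded in Proposition \ref{prop:kol}. First I would fix $s<r-1+\frac{2}{\rho}$ and $\kappa=(r-s-1)\rho$, set $\alpha:=\min\{\frac12,\frac{\kappa}{2}+1\}$, and recall from Theorem \ref{thm:eu} that the unique mild solution $u$ satisfies $u\in C^{\alpha}([0,T];L^p(\Omega;\dot{H}^s))$. Viewing $u$ as a stochastic process $X(t):=u(t)$ with values in the Banach space $B:=\dot{H}^s$, this is precisely the hypothesis $X\in C^{\alpha}([0,T];L^p(\Omega;B))$ required in Proposition \ref{prop:kol}.

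The second step is to check the admissibility condition $\alpha p>1$ of Proposition \ref{prop:kol}. By hypothesis $p$ is chosen so that $\min\{\frac12,\frac{\kappa}{2}+1\}-\frac1p>0$, which is exactly $\alpha p>1$. Proposition \ref{prop:kol} then yields a continuous modification $\tilde{u}$ of $u$ with $\tilde{u}\in L^p(\Omega;C^{\beta}([0,T];\dot{H}^s))$ for every $\beta<\alpha-\frac1p=\min\{\frac12,\frac{\kappa}{2}+1\}-\frac1p$, which is the main assertion.

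For the final (\emph{in particular}) statement I would apply the last part of Proposition \ref{prop:kol}, which requires $X(t_0)\in L^p(\Omega;B)$ for some $t_0\in[0,T]$. Taking $t_0=0$ and using $X(0)=u_0$, it suffices to verify that $u_0\in L^p(\Omega;\dot{H}^s)$. By Assumption \ref{ass:id} we have $u_0\in L^p(\Omega;\dot{H}^{r+\frac1\rho})$, and since $\rho>1$ forces $s<r-1+\frac{2}{\rho}<r+\frac1\rho$, the continuous embedding $\dot{H}^{r+\frac1\rho}\hookrightarrow\dot{H}^s$ yields $u_0\in L^p(\Omega;\dot{H}^s)$, as needed.

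The argument is essentially an invocation of the two preceding results, so there is no genuine analytic obstacle; the only point requiring care is the bookkeeping for the final clause, namely confirming the strict inequality $s<r+\frac1\rho$ (equivalently $\rho>1$) so that the initial datum lies in the target space $\dot{H}^s$ and the last assertion of Proposition \ref{prop:kol} indeed applies.
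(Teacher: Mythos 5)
Your proposal is correct and follows essentially the same route as the paper, whose proof is precisely the one-line invocation of Theorem \ref{thm:eu} together with Proposition \ref{prop:kol}. The details you supply (checking $\alpha p>1$ and verifying $u(0)=u_0\in L^p(\Omega;\dot{H}^s)$ via the embedding $\dot{H}^{r+\frac1\rho}\hookrightarrow\dot{H}^s$, valid since $A$ is positive definite and $s<r-1+\frac2\rho<r+\frac1\rho$ for $\rho>1$) are exactly the bookkeeping the paper leaves implicit.
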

\begin{proof}
The statement follows from Theorem \ref{thm:eu} and Proposition \ref{prop:kol}.
\end{proof}
\begin{remark}[The border case $s=r-1+\frac{2}{\rho}$]\label{ss:bcase} Here, we briefly comment on the on the mean-$p$ continuity of $u$ with values in $\dot{H}^{r-1+\frac{2}{\rho}}$. If $r=1-\frac{1}{\rho}$, then we are in the classical global Lipschitz situation for $G$ and the linear growth condition
\eqref{eq:ass3} follows from \eqref{eq:ass2}. Hence both the Lipschitz and linear growth estimate for $G$ is given with respect to the same norms. If Assumption \ref{ass:id} on the initial data holds for some $p\ge 2$, the using the already established fact from Theorem \ref{thm:eu} that $ u\in C^{\frac{1}{2}}([0,T];L^p(\Omega; H))$ together with the smoothing estimates \eqref{eq:estS} and \eqref{eq:estSDot}, and Lemma \ref{lem:cont} one can verify that $u\in C([0,T];L^p(\Omega,\dot{H}^{\frac 1\rho})$ using analogous arguments as in the semigroup case in \cite{KrLa}. For treating the general case $r<1$ one has to impose two extra conditions. Firstly, the extra assumption $G\in C(\dot{H}^{r-1+\frac{1}{\rho}};L^2_{0,r-1+\frac{1}{\rho}})$ (but not necessarily Lipschitz) has to be fulfilled as for general $r$ the linear growth and Lipschitz conditions are not compatible. Secondly, Assumption \ref{ass:id} on the initial data has to hold for some $p> 2$ in order to obtain a version $\tilde{u}$ of $u$ with $\tilde{u}\in L^p(\Omega;C([0,T];\dot{H}^{r-1+\frac{1}{\rho}}))$ via Corollary \ref{cor:path}. Then, again one can argue in a similar fashion as in \cite{KrLa}, using that $ u\in C^{\frac{1}{2}}([0,T];L^p(\Omega,\dot{H}^{r-1+\frac 1\rho}))$ together with the smoothing estimates \eqref{eq:estS} and \eqref{eq:estSDot}, and Lemma \ref{lem:cont}, to conclude that $u\in C([0,T];L^p(\Omega,\dot{H}^{r-1+\frac 2\rho})$.
\end{remark}
\begin{remark}[Initial data]\label{rem:asss}
Suppose that $b$ satisfies the sufficient conditions (for Assumption \ref{ass:convKernel}) laid out in Remark \ref{rem:realb}
and, in addition, the kernel $b$ further obeys
\begin{equation}\label{ass:assymp}
\hat{b}(\lambda)\sim \lambda^{1-\rho}\mbox{ as }\lambda\to\infty.
\end{equation}
An example of a kernel $b$ satisfying this is $b(t)=\frac{1}{\Gamma(\rho-1)}t^{\rho-2}e^{-\eta t}$, $1<\rho<2$ and $\eta\ge 0$.
Let us note that thanks to \cite[estimate (3.6)]{MP97}, we always have that
\begin{equation}\label{eq:omega0}
\hat b(\lambda) \geq c \lambda^{1-\rho},~ \lambda > 1.
\end{equation}
\noindent where $\rho\in(1,2)$ is defined in \eqref{eq:sector}. Therefore, the additional
the assumption \eqref{ass:assymp} on the asymptotic behaviour of $\hat{b}$ really reads as
\begin{equation*}
\hat{b}(\lambda)\le C \lambda^{1-\rho},~\lambda>1.
\end{equation*}
It follows from \eqref{ass:assymp} via a Tauberian theorem for the Laplace transform, that
\begin{equation}\label{eq:l1b}
\|b\|_{L^1(0,t)}\le Ct^{\rho-1}.
\end{equation}
This estimate can be used to prove \eqref{eq:estSDot2} in  Lemma \ref{lem:estSolOp} which then shows an improved smoothing behavior of $\dot{S}$ compared to \eqref{eq:estSDot1}. The only place where \eqref{eq:estSDot1} gets used is the estimate \eqref{eq:I1} where we could then use \eqref{eq:estSDot2} instead. Therefore, if \eqref{ass:assymp} is fulfilled then the assumption $u_0\in L^p(\Omega;\dot{H}^{r+\frac{1}{\rho}})$ on the initial data from Assumption \ref{ass:id} can be replaced by a weaker assumption $u_0\in L^p(\Omega;\dot H^{r-1+\frac2\rho})$ (which in fact corresponds to the regularity of $u$) throughout the paper.
\end{remark}
\subsection{Additive noise}\label{ss:an} Although the conditions on the data are tailored to the semilinear, multiplicative noise case, we state the regularity results for the special case of additive noise with zero initial data and $F\equiv 0$. We set $U=H$ and define
$$
W_S(t):=\int_0^tS(t-s)\,\dd W(s).
$$
These results are interesting on their own and, for example, in \cite{Barbu}, the existence of maximal type spatial regularity results for $W_S$ are part of the set of hypothesis for obtaining existence and uniqueness results for the nonlinear equations. Here we give a set of conditions on $b$ and $Q$ which imply a certain spatial regularity. The result is consistent with the pointwise-in-time spatial regularity result \cite[Theorem 3.6]{kovacsprintems} but, as usual for these kind of results, the right endpoint for the regularity
interval not included.
\begin{corollary}[Additive trace class noise]\label{cor:tcn}
Suppose that Assumption \ref{ass:convKernel} holds and assume further that $Q:H\to H$ is such that $A^{\frac{r-1+\frac{1}{\rho}}{2}}Q^{\frac12}$ is a Hilbert-Schmidt operator on $H$ for some $r\in [1-\frac{1}{\rho},1)$. Let $s<r-1+\frac2\rho$ and $\kappa=(r-s-1)\rho$. Then, there is a continuous modification $\tilde{W}_S$ of $W_S$. Furthermore, if $p> 2$ is such that $\min\{\frac12,\frac\kappa2+1\}-\frac{1}{p}>0$, then $\tilde{W}_S\in L^p(\Omega;C^{\beta}([0,T];\dot{H}^s))$ for all $\beta<\min\{\frac12,\frac\kappa2+1\}-\frac{1}{p}$. In particular, $\tilde{W}_S\in L^p(\Omega;C([0,T];\dot{H}^s))$ for all $p\ge 1$; that is, there is $C=C(T,s,p)>0$ such that
\begin{equation}\label{eq:adn}
\mathbb{E}\sup_{t\in [0,T]}\|\tilde{W}_S(t)\|_{\dot{H}^s}^p\le C,\text{ for all } s<r-1+\frac2\rho.
\end{equation}
\end{corollary}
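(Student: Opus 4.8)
The plan is to recognize $W_S$ as the stochastic convolution term already analyzed in Proposition~\ref{prop:regularity}, specialized to the degenerate case $u_0=0$, $F\equiv 0$, and $G$ equal to the \emph{constant} (state-independent) inclusion $\iota$ of the Cameron--Martin space $U_0$ into $H$. Indeed, with $U=H$ and $G\equiv\iota$ one has $\Phi_0(u)(t)=\int_0^t S(t-\sigma)\,\dd W(\sigma)=W_S(t)$ independently of $u$, so the regularity of $W_S$ will follow from the mean-$p$ estimate \eqref{eq:phi1} once the hypotheses of Proposition~\ref{prop:regularity} are checked in this setting; note that Assumption~\ref{ass:id} is vacuous here since $u_0=0$.

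The only genuinely new step is to translate the Hilbert--Schmidt hypothesis on $Q$ into the abstract condition \eqref{eq:ass3}. First, \eqref{eq:ass1} and \eqref{eq:ass2} hold trivially, as $F\equiv 0$ and $G$ is constant. For \eqref{eq:ass3}, I would fix an orthonormal basis $\{e_k\}$ of $(\ker Q^{\frac12})^\perp$; then $\{f_k:=Q^{\frac12}e_k\}$ is an orthonormal basis of $U_0$, and by the definition of the norm on $L^0_{2,r-1+1/\rho}$,
\[
\|\iota\|_{L^0_{2,r-1+1/\rho}}^2
=\sum_{k}\bigl\|A^{\frac{r-1+1/\rho}{2}}Q^{\frac12}e_k\bigr\|^2
=\bigl\|A^{\frac{r-1+1/\rho}{2}}Q^{\frac12}\bigr\|_{\HS}^2,
\]
which is finite precisely by the standing assumption that $A^{\frac{r-1+1/\rho}{2}}Q^{\frac12}$ is Hilbert--Schmidt on $H$ (this also uses $r\ge 1-\tfrac1\rho$, so that the exponent is nonnegative). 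Hence \eqref{eq:ass3} holds with a constant independent of the (irrelevant) argument, and all hypotheses of Proposition~\ref{prop:regularity} are met for every $p\ge 2$.

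With this in hand, applying \eqref{eq:phi1} with $u\equiv 0$ gives the mean-$p$ bound
\[
\|W_S\|_{C^{\min\{\frac12,\frac\kappa2+1\}}([0,T];L^p(\Omega;\dot H^s))}\le C_T
\]
for all $p\ge 2$ and $s<r-1+\tfrac2\rho$. Since $W_S(0)=0\in L^p(\Omega;\dot H^s)$, invoking Kolmogorov's continuity theorem (Proposition~\ref{prop:kol}) exactly as in the proof of Corollary~\ref{cor:path} produces the continuous modification $\tilde W_S$ together with the pathwise H\"older regularity: for $p>2$ with $\min\{\tfrac12,\tfrac\kappa2+1\}-\tfrac1p>0$ one obtains $\tilde W_S\in L^p(\Omega;C^{\beta}([0,T];\dot H^s))$ for every $\beta<\min\{\tfrac12,\tfrac\kappa2+1\}-\tfrac1p$, and in particular $\tilde W_S\in L^p(\Omega;C([0,T];\dot H^s))$.

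Finally, for the bound \eqref{eq:adn} valid for \emph{all} $p\ge 1$, I would argue by moment comparison on the finite measure space. Since $s<r-1+\tfrac2\rho$ forces $\kappa=(r-s-1)\rho>-2$, we have $\min\{\tfrac12,\tfrac\kappa2+1\}>0$, so one may fix a large $p'$ with $\min\{\tfrac12,\tfrac\kappa2+1\}-\tfrac1{p'}>0$; the previous paragraph then yields $\mathbb{E}\sup_{t\in[0,T]}\|\tilde W_S(t)\|_{\dot H^s}^{p'}<\infty$. For arbitrary $p\ge 1$ with $p\le p'$, Jensen's inequality gives $\mathbb{E}\sup_{t}\|\tilde W_S(t)\|_{\dot H^s}^{p}\le\bigl(\mathbb{E}\sup_{t}\|\tilde W_S(t)\|_{\dot H^s}^{p'}\bigr)^{p/p'}<\infty$, which is \eqref{eq:adn}. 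I expect the translation identity of the second paragraph to be the only substantive point; everything else is a direct specialization of results already established, so I anticipate no essential obstacle beyond bookkeeping.
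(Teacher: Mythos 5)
Your proposal is correct and takes essentially the same route as the paper: the paper's proof is a one-line citation of Corollary \ref{cor:path} with $U=H$, $u_0=0$, $F\equiv 0$ and $G(x)\equiv I$, and your verification that the Hilbert--Schmidt hypothesis on $A^{\frac{r-1+\frac{1}{\rho}}{2}}Q^{\frac12}$ gives \eqref{eq:ass3}, together with the Jensen argument for $p\ge 1$, simply makes explicit what that citation leaves implicit. The only cosmetic difference is that you apply Proposition \ref{prop:regularity} and Proposition \ref{prop:kol} directly with $u\equiv 0$ (exploiting that $\Phi_0$ is constant in $u$ for additive noise) rather than passing through Theorem \ref{thm:eu}, which is the same underlying machinery.
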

\begin{proof}
The statement follows from Corollary \ref{cor:path} when setting $U=H$, $u_0=0$, $F\equiv 0$ and $G(x)\equiv I$.
\end{proof}
We end this section with a discussion of a special instance of \eqref{eq:problem} to show the flexibility of the abstract framework.
\begin{remark}[Space-time white noise]\label{rem:wn}
As in Corollary \ref{cor:tcn}, suppose that Assumption \ref{ass:convKernel} holds.
 Let $H=L^2(0,1)$, let $A$ be the Dirichlet Laplacian on $L^2(0,1)$ with $\mathcal{D}(A)=H^2(0,1)\cap H_0^1(0,1)$, and let $u_0=0$, $F\equiv 0$ and $G(x)\equiv I$ (where $I$ is the identity of $H$). We would like to consider a driving process $W$ given by the formal series $W(t)=\sum_{k=1}^{\infty}f_k\beta_k(t)$, where $\beta_{k}$ are real valued independent standard Brownian motions and $\{f_k\}$ is an orthonormal basis of $H$. With the notation of Remark \ref{rem:UH} we have $\tilde{Q}=I$, which is not of trace class. We choose the space $U=\dot{H}^{-\gamma}$ for $\gamma>\frac12$ and the embedding $J:\tilde{Q}^{\frac12}(H)=H\to U$ given by  $J=A^{-\frac{\gamma}{2}}$. Taking into account the asymptotics of the eigenvalues of $A$ we have that $J$ is Hilbert-Schmidt and hence $Q:=JJ^*$ is trace class. Then, as already mentioned in Remark \ref{rem:UH} the Cameron-Martin space of $Q$ and the Cameron-Martin space of $\tilde{Q}$ are isometrically isomorphic and hence $U_0\cong H$. Then assumption \eqref{eq:ass3} is satisfied if
$$
\|I\|_{L_{2,r-1+\frac1\rho}^0}=\sum_{k=1}^{\infty}\|A^{\frac{r-1+\frac1\rho}{2}}e_k\|^2=\sum_{k=1}^{\infty}\lambda_k^{r-1+\frac1\rho}
\sim \sum_{k=1}^{\infty}k^{2(r-1+\frac1\rho)}<\infty,
$$
whence $r<\frac12-\frac{1}{\rho}$. Therefore, using Corollary \ref{cor:path}, we have that there is a continuous modification $\tilde{W}_S$ of $W_S$. Furthermore, if $s<\frac{1}{\rho}-\frac12$, $\kappa=(-\frac12-\frac{1}{\rho}-s)\rho$ and if $p> 2$ is such that $\min\{\frac12,\frac\kappa2+1\}-\frac{1}{p}>0$, then $\tilde{W}_S\in L^p(\Omega;C^{\beta}([0,T];\dot{H}^s))$ for all $\beta<\min\{\frac12,\frac\kappa2+1\}-\frac{1}{p}$. In particular, $\tilde{W}_S\in L^p(\Omega;C([0,T];\dot{H}^s))$ for all $p\ge 1$; that is, there is $C=C(T,s,p)>0$ such that
\begin{equation}\label{eq:adn1}
\mathbb{E}\sup_{t\in [0,T]}\|W_S(t)\|_{\dot{H}^s}^p\le C\text{ for all }s<\frac{1}{\rho}-\frac12.
\end{equation}
\end{remark}
\begin{remark}
The maximal type estimates \eqref{eq:adn} and \eqref{eq:adn1} are consistent with the pointwise-in-time bounds obtained in \cite[Theorem 3.6]{kovacsprintems} for $p=2$.
\end{remark}
\section{Appendix}
Here we first derive the crucial bounds on the second derivative of the solution of the scalar problem
\begin{align}
	\label{eq:functions}
	\dot s_\mu(t)+\mu \int\limits_{0}^t b(t-s)s_\mu\dd s=0,\quad s_\mu(0)=1,\quad \mu>0.
\end{align}
These then yield the norm bound \eqref{eq:estSDot} in Lemma \ref{lem:estSolOp}, on the derivative of the solution of the linear, homogeneous deterministic problem via a spectral decomposition \eqref{eq:sk}. Recall the following important result regarding $L^1$ bounds for Laplace transforms which is essentially follows from \cite[Theorem 4.3]{Hille1935}.
\begin{proposition}\label{Hardy}
  Let $r$ be an analytic function in the right halfplane with boundary function $g(x)=\lim_{\ep\to 0}r(\ep+ix)$ for all $x\in\R$. If $g$ is of bounded variation and $g\in L^p(\R)$ for some $1\le p<\infty$, then
there exists $f\in L^1(\R_+)$ with $\int_0^\infty e^{-\lambda t}f(t)\,dt =r(\lambda)$ and
$$\|f\|_ {L^1(\R_+)}\le  \frac12\|g'\|_ {L^1(\R)}.$$
\end{proposition}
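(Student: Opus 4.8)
The plan is to construct $f$ directly by Fourier inversion of the boundary function $g$ along the imaginary axis, to use the analyticity of $r$ to force $f$ to be supported on $\R_+$ (a \emph{Paley--Wiener} step), and finally to extract the sharp $L^1$ bound by an integration by parts that transfers the derivative onto $g$. Concretely, for $t>0$ I would set
\[
f(t):=\frac{1}{2\pi}\int_{-\infty}^\infty e^{ikt}g(k)\,\dd k,
\]
interpreting this as the $\ep\to0$ limit of the Bromwich integral $\frac{1}{2\pi}\int_{-\infty}^\infty e^{(\ep+ik)t}r(\ep+ik)\,\dd k$. The hypotheses that $g\in L^p(\R)$ and that $g$ is of bounded variation (whence $g(\pm\infty)=0$ and $\dd g$ is a finite signed measure of total mass $\norm{g'}_{L^1(\R)}$) guarantee that this limit exists and defines a continuous function.

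Next I would establish the two structural facts. First, $f(t)=0$ for $t<0$: using the analyticity of $r$ in the right half-plane together with the decay of $g$, one shifts the contour of $\frac{1}{2\pi}\int e^{(\ep+ik)t}r(\ep+ik)\,\dd k$ to the right, and for $t<0$ the factor $e^{\lambda t}$ forces the shifted integral to vanish in the limit $\Re\lambda\to+\infty$, so $\mathrm{supp}\,f\subset[0,\infty)$. Read for $t>0$, the same contour manipulation yields $\int_0^\infty e^{-\lambda t}f(t)\,\dd t=r(\lambda)$, i.e. $f$ is the inverse Laplace transform of $r$. This part is essentially routine complex analysis.

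For the quantitative estimate I would integrate by parts in $k$. Since $g$ is of bounded variation with $g(\pm\infty)=0$, the boundary terms drop and
\[
f(t)=-\frac{1}{2\pi i t}\int_{-\infty}^\infty e^{ikt}\,\dd g(k),\qquad t>0 ,
\]
which gives the pointwise bound $\abs{f(t)}\le \frac{1}{2\pi t}\norm{g'}_{L^1(\R)}$. This alone is \emph{not} integrable in $t$, so the estimate cannot be closed by the triangle inequality. The genuine gain comes from combining it with the half-line support: the inverse Fourier transform of $\dd g$ is itself supported on $[0,\infty)$ and vanishes at $t=0$ (because $\int \dd g=g(\infty)-g(-\infty)=0$), and this one-sidedness is precisely what turns the $1/t$ behaviour into $L^1$-integrability with the sharp constant $\demi$. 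This is the Hardy-space heart of the matter, and it is where I would invoke \cite[Theorem 4.3]{Hille1935}, which packages exactly this sharp estimate.

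The main obstacle, then, is not the construction of $f$ nor the verification that its Laplace transform equals $r$, but the passage from the pointwise $1/t$-bound to the $L^1$ bound with the precise constant $\frac12$. One cannot decompose $g$ into elementary bounded-variation pieces (such as indicators) without destroying membership in the Hardy space, so the analyticity/half-line structure must be used in an essential, non-pointwise manner; this is exactly the content that the classical sharp result of Hille supplies, and accordingly the cleanest route is to reduce the statement to that theorem rather than to reprove the sharp constant from scratch.
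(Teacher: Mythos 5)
Your proposal is correct and takes essentially the same route as the paper: the paper gives no independent proof of this proposition at all, stating only that it ``essentially follows from'' \cite[Theorem 4.3]{Hille1935}, and your argument likewise delegates the one non-routine ingredient --- the passage from the pointwise $1/t$ bound to integrability with the sharp constant $\frac12$ --- to that same classical theorem. The scaffolding you add (Fourier inversion of $g$, the Paley--Wiener support argument, and the integration by parts using $g(\pm\infty)=0$, which indeed follows from bounded variation combined with $g\in L^p$) is sound, but it does not constitute a genuinely different approach.
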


The next lemma shows that it is possible to give a simple set of conditions on the convolution kernel $b$, rather than on $\widehat{b}$, so that the kernel satisfies Assumption \ref{ass:convKernel}.
\begin{lemma}\label{lem:sd}
	If $0\neq b\in L^1_{\loc}(\R_+)$ is $4$-monotone, $\lim_{t\to \infty}b(t)=0$ and satisfies \eqref{eq:b-smooth}, then $b$ satisfies Assumption \ref{ass:convKernel}.
%Let $g(k)=\lim_{\ep\to0}\hat b(\ep+ik)$. Then $b$ is 3-regular, $k\mapsto \frac{g(k)}{|k|+|g(k)|}\in L^p(\R)$ for some $1\le p<\infty$ and  there exists $C>0$ such that for all $\mu>0$,
%\begin{equation}\label{weakboundhatbLem}
%  \int _0^\infty \frac{|g(k)|} {(|k|+\mu|g(k|)^2}\,dk \le C/\mu
%\end{equation}
%and
%\begin{equation}\label{strongboundhatbLem}
%   \int _0^\infty \frac{|k^2 g'''(k)|+|k g''(k)|+|g'(k)|+1/\mu} {(|k|+\mu|g(k)|)^2}\,d\lambda \le C/\mu^{1+1/\rho}.
%\end{equation}
\end{lemma}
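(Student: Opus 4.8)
The plan is to verify the four clauses of Assumption~\ref{ass:convKernel} one by one, converting each time-domain hypothesis on $b$ into a statement about the boundary function $g(k)=\hat b(ik)$. Throughout set $B(\tau)=\int_0^\tau b$ and $D(\tau)=\int_0^\tau(-s\dot b(s))\,\dd s=B(\tau)-\tau b(\tau)$. Since $b$ is $4$-monotone, $b\ge0$ is nonincreasing, convex, integrable at $0$ and $b(t)\to0$; hence $b$ is of subexponential growth. The theory of $k$-monotone kernels in \cite{pruss} yields that $b$ is $3$-regular (in particular $2$-regular) and that $\hat b$ extends continuously to $\{\Re\lambda\ge0\}\setminus\{0\}$; passing to $\lambda=ik$ the regularity estimate gives $|k^{\,j}g^{(j)}(k)|\le C|g(k)|$ for $0\le j\le3$. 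In particular $|k^2g'''(k)|+|kg''(k)|+|g'(k)|\le C|g(k)|/k$, so after this reduction both \eqref{weakboundhatbAss} and \eqref{strongboundhatbAss} amount to bounding the model integrals $\int_0^\infty k^a|g(k)|^c(k+\mu|g(k)|)^{-2}\,\dd k$ (the lone $1/\mu$-term in \eqref{strongboundhatbAss} is treated the same way), which only require two-sided power bounds on $|g|$.

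For sectoriality I would use $\Re g(k)=\int_0^\infty\cos(kt)b(t)\,\dd t$ and $-\Im g(k)=\int_0^\infty\sin(kt)b(t)\,\dd t$. Integrating the cosine transform by parts (using that $-\dot b\ge0$ is nonincreasing) shows $\Re g(k)>0$, and $-\Im g(k)>0$ as the sine transform of a nonnegative nonincreasing function; the standard sine/cosine transform estimates for convex monotone functions then give $\Re g(k)\asymp D(1/k)$ and $-\Im g(k)\asymp k\int_0^{1/k}sb(s)\,\dd s$. Hence $\tan|\arg g(k)|=-\Im g(k)/\Re g(k)$ is comparable to the ratio in \eqref{eq:b-smooth} taken at $t=1/k$. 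Since $g$ lies in the fourth quadrant and $\arg\hat b$ is harmonic, $|\arg\hat b|$ attains its supremum on the boundary (maximum principle), so the finiteness of the $\limsup$ in \eqref{eq:b-smooth} at $t\to0,\infty$ together with continuity on compact ranges forces $\sup_k\tan|\arg g(k)|<\infty$, i.e. $\theta_b:=\sup_{\Re\lambda>0}|\arg\hat b(\lambda)|<\pi/2$. As $b\not\equiv0$ forces $\theta_b>0$, we may set $\rho=1+\tfrac2\pi\theta_b\in(1,2)$ as in \eqref{eq:sector}.

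The crux is the two-sided magnitude bound $c_1k^{1-\rho}\le|g(k)|\le c_2k^{1-\rho}$ for $k>0$. First, $1$-regularity, $|\lambda\hat b'/\hat b|\le C$, means $\hat b$ varies slowly in the angular direction, so integrating in $\arg\lambda$ gives $|g(k)|=|\hat b(ik)|\asymp\hat b(k)$ on the real ray; and since $b\ge0$ is nonincreasing, $\hat b(k)\asymp B(1/k)$. Thus the claim reduces to $B(\tau)\asymp\tau^{\rho-1}$ as $\tau\to0,\infty$. The lower bound $\hat b(\lambda)\ge c\lambda^{1-\rho}$ of \cite{MP97} supplies one inequality; the matching upper bound, together with the consistency of the exponent with the sector angle at both ends, is where \eqref{eq:b-smooth} is used again, upgrading the boundedness of the argument into regular variation of $B$ of the index $\rho-1$ dictated by $\theta_b$.

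Granting $|g(k)|\asymp k^{1-\rho}$, the model integrals are evaluated by splitting $\R_+$ at the crossover $k_0\asymp\mu^{1/\rho}$ where $k\asymp\mu|g(k)|$: the weak integral is $O(1/\mu)$, matching \eqref{weakboundhatbAss}, and the strong integral (the $1/\mu$-term included) is $O(\mu^{-1-1/\rho})$, matching \eqref{strongboundhatbAss}. Clause~(1) follows since $|g(k)|/(|k|+|g(k)|)\asymp\min\{1,|k|^{-\rho}\}$, which lies in $L^p(\R)$ whenever $\rho p>1$. I expect the third paragraph to be the main obstacle: turning the mere finiteness of the $\limsup$ in \eqref{eq:b-smooth} into clean two-sided powers for $B$ (hence $|g|$) with one and the same exponent $\rho-1$ at both $\tau\to0$ and $\tau\to\infty$—so that the $\mu$-estimates hold uniformly in $\mu>0$—and reconciling the exponent furnished by \cite{MP97} with the $\rho$ defined through the sector angle.
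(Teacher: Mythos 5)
Your reduction of \eqref{weakboundhatbAss} and \eqref{strongboundhatbAss} to a two-sided power bound $c_1k^{1-\rho}\le|g(k)|\le c_2k^{1-\rho}$ for all $k>0$ is the fatal step: that bound is false for the class of kernels the lemma covers, so the third paragraph is not merely ``the main obstacle'' but cannot be repaired. For the lower bound, take the finite-history kernel \eqref{eq:bfin} from Remark \ref{rem:realb}: it is $4$-monotone, tends to $0$ and satisfies \eqref{eq:b-smooth}, but since it lies in $L^1(\R_+)$ its Laplace transform is bounded near $\lambda=0$, whereas $k^{1-\rho}\to\infty$ as $k\to0$; so $|g(k)|\ge c_1k^{1-\rho}$ fails near the origin. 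For the upper bound at infinity, $|g(k)|\le c_2k^{1-\rho}$ is precisely the hypothesis \eqref{ass:assymp} of Remark \ref{rem:asss}, which the paper introduces as a \emph{genuinely additional} assumption beyond those of the lemma: what is automatic (by \cite{MP97}) is only the one-sided estimate \eqref{eq:omega0}. Indeed, \eqref{eq:b-smooth} is equivalent (by \cite[Proposition 3.10]{pruss}) to sectoriality of angle less than $\pi/2$; it constrains the \emph{argument} of $\hat b$, not its modulus, and it survives slowly varying perturbations: a kernel behaving like $t^{\rho-2}\log(1/t)$ near $t=0$ passes \eqref{eq:b-smooth} (the logarithm cancels in the ratio) yet has $|\hat b(ik)|\asymp k^{1-\rho}\log k$, which is not $O(k^{1-\rho'})$ for any admissible sector exponent $\rho'\ge\rho$. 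Consequently the crossover evaluation in your last paragraph, with $k_0\asymp\mu^{1/\rho}$ and constants uniform in $\mu>0$, does not go through, and ``upgrading boundedness of the argument into regular variation with index exactly $\rho-1$ at both ends'' is not a provable statement.

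The paper's proof shows how to get the growth conditions from one-sided, non-power information only. Sectoriality is taken from \cite[Proposition 3.10]{pruss} (your second paragraph is essentially a sketch of this) and $3$-regularity from \cite[Proposition 3.3]{pruss}, as you do; but then, instead of modulus asymptotics, it invokes \cite[Proposition 3.8]{pruss}, $|\hat b^{(n)}(\lambda)|\le C\int_0^{1/|\lambda|}\tau^nb(\tau)\,\dd\tau$ for $n=0,1,2,3$, giving $|\lambda^{n-1}\hat b^{(n)}(\lambda)|\le C\Psi(1/|\lambda|)$ with $\Psi(s)=\int_0^s tb(t)\,\dd t$, and combines this with the sectorial inequality $|\lambda+\mu\hat b(\lambda)|\ge c(|\lambda|+\mu|\hat b(\lambda)|)$; the integrals \eqref{weakboundhatbAss}--\eqref{strongboundhatbAss} are then estimated exactly as the Hardy-norm bounds for $h'_\mu$ and $h''_\mu$ in the proof of \cite[Proposition 6]{MP97}, where the exponent $1+1/\rho$ comes out of the sector angle (through \eqref{eq:omega0}) with no matching upper bound on $|g|$ ever being needed. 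Note also that clause (1) of Assumption \ref{ass:convKernel} requires much less than your power asymptotics: the $n=0$ case of the Pr\"uss bound makes $g$ bounded near infinity, whence $g/(|k|+|g|)\in L^p(\R)$ for every $1<p<\infty$.
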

\begin{proof}  It follows
 from \cite[Proposition 3.10]{pruss} that for 3-monotone and locally integrable kernels $b$, condition \eqref{eq:b-smooth} is equivalent to \eqref{eq:sector}
and thus $b$ is sectorial of angle less that $\pi/2$. Furthermore, 4-monotonicity implies 3-regularity (\cite[Proposition 3.3]{pruss}).  By \cite[Proposition 3.8]{pruss},
\begin{align*}
     |\hat b^{(n)}(\lambda)|\leq C \int_0^{1/|\lambda|} \tau^n b(\tau)\,d\tau,\quad \mathrm{Re}\lambda>0,\ n=0,1,2,3.
 \end{align*}
 In particular, it implies that $ g$ is bounded in a neighbourhood of infinity and hence $k\mapsto \frac{g(k)}{|k|+|g(k)|}\in L^p(\R)$ for all $1< p<\infty$. Furthermore, it also implies that \begin{align*}
     | \lambda^{n-1}\hat b^{(n)}(\lambda)|\leq C|\lambda|^{n-1} \int_0^{1/|\lambda|} \tau^n b(\tau)\,d\tau\le C\Psi(1/|\lambda|),\quad \mathrm{Re}\lambda>0,\ n=1,2,3,
 \end{align*} where $\Psi(s)=\int_0^st b(t)\,\dd t$, $s>0$. Then identically to the part of the proof of  \cite[Proposition 6]{MP97} that estimates the Hardy norms of $h'_\mu$ and $h''_\mu$ we have  \eqref{weakboundhatbAss} and \eqref{strongboundhatbAss}.
 \end{proof}

\begin{lemma}\label{lem:smu}
Suppose that the convolution kernel $b$ satisfies Assumption \ref{ass:convKernel}.
  Then there
	exists $C_0>0$ such that
	\begin{align}
         \|s_{\mu}\|_{L^{\infty}(\R_+)}&\le 1,&\mu>0;\label{eq:contr}\\
         \|s_{\mu}\|_{L^1(\R_+)}&\le C_0 \mu^{-1/\rho};&
		\mu>0,\label{eq:fl1mu}\\
		\label{eq:functionsEst1}
		\|\dot s_\mu\|_{L^1(\R_+)}+\| t\ddot s_\mu\|_{L^1(\R_+)}&\leq C_0;&
		\mu>0,\\
		\label{eq:functionsEst2}
		\|t\dot s_\mu\|_{L^1(\R_+)}+\| t^2\ddot s_\mu\|_{L^1(\R_+)}&\leq C_0 \mu^{-1/\rho};&
		\mu>0.
	\end{align}
\end{lemma}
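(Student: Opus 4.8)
The plan is to pass to the Laplace transform of \eqref{eq:functions}. Writing $h_\mu(\lambda):=\widehat{s_\mu}(\lambda)$ and transforming \eqref{eq:functions} gives $\lambda h_\mu-1+\mu\hat b(\lambda)h_\mu=0$, so that
\[
h_\mu(\lambda)=\frac{1}{\lambda+\mu\hat b(\lambda)},\qquad \Re\lambda>0.
\]
Since $b$ is sectorial of angle less than $\pi/2$, the boundary function $g(k)=\lim_{\ep\to0}\hat b(\ep+ik)$ takes values in a sector $\{|\arg z|\le\pi/2-\delta\}$, whence $\Re g\ge0$ and, crucially, there is $c>0$ with
\[
|ik+\mu g(k)|\ge c\,(|k|+\mu|g(k)|),\qquad k\in\R,\ \mu>0,
\]
because the purely imaginary $ik$ and the sectorial $\mu g(k)$ cannot be nearly anti-parallel. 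I would first record that, by the assumptions on $b$ (2-regularity and subexponential growth), $s_\mu\in C^2((0,\infty))$ with $\dot s_\mu(0)=0$, and that all transforms below are transforms of genuine $L^1$-functions, so the manipulations are legitimate. The estimate \eqref{eq:contr} is of a different nature than the remaining three and I would prove it separately; the $L^1$-bounds \eqref{eq:fl1mu}--\eqref{eq:functionsEst2} all follow from the Hardy-type Proposition \ref{Hardy}.

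For \eqref{eq:contr} I would not use the transform at all. Sectoriality of angle less than $\pi/2$ forces $\Re\hat b(\lambda)\ge0$ for $\Re\lambda>0$, i.e. $b$ is a kernel of positive type, so that $\int_0^T\varphi(t)\int_0^t b(t-\tau)\varphi(\tau)\,\dd\tau\,\dd t\ge0$ for every real $\varphi$ and $T>0$. Multiplying \eqref{eq:functions} by the (real) function $s_\mu$ and integrating over $[0,T]$ gives $\tfrac12(s_\mu(T)^2-1)=-\mu\int_0^T s_\mu(t)(b*s_\mu)(t)\,\dd t\le0$, hence $|s_\mu(T)|\le1$ for all $T$, which is \eqref{eq:contr}.

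For the $L^1$-bounds the engine is Proposition \ref{Hardy}: if $\phi$ is one of $s_\mu,\dot s_\mu,t\dot s_\mu,t\ddot s_\mu,t^2\ddot s_\mu$ with transform $r=\hat\phi$, then, using $g(-k)=\overline{g(k)}$ (valid since $b$ is real), the proposition yields $\|\phi\|_{L^1(\R_+)}\le\tfrac12\|(r(i\cdot))'\|_{L^1(\R)}=\int_0^\infty|r'(ik)|\,\dd k$; here the $L^p$-hypothesis is supplied by condition (1) of Assumption \ref{ass:convKernel}, and the bounded-variation hypothesis follows a posteriori from the finiteness of the right-hand side. I would then compute each transform from $h_\mu$: e.g. $\widehat{\dot s_\mu}=\lambda h_\mu-1=-\mu\hat b\,h_\mu$, $\widehat{\ddot s_\mu}=\lambda^2h_\mu-\lambda$, and $\widehat{t^j\psi}=(-1)^j(\hat\psi)^{(j)}$, so that each $r'(ik)$ is a rational expression in $g$ and its derivatives $g',g'',g'''$ (via $\hat b^{(n)}(ik)=(-i)^ng^{(n)}(k)$) over a power of $ik+\mu g$. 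Using the lower bound above I would majorize $|r'(ik)|$ by a sum of weighted terms $\mu^{a}\,|k^{m}g^{(n)}|/(|k|+\mu|g|)^2$, reducing lower-order weights by the 2-regularity identities $|k\,g'(k)|\le C|g(k)|$ and $|k^2g''(k)|\le C|g(k)|$. The leading contributions are then controlled exactly by the hypotheses: the $|g|$-term by \eqref{weakboundhatbAss} (factor $\mu^{-1}$, hence the order $\mu^0$ in \eqref{eq:functionsEst1}), and the $|g'|,|kg''|,|k^2g'''|,1/\mu$-terms by \eqref{strongboundhatbAss} (factor $\mu^{-1-1/\rho}$, hence the order $\mu^{-1/\rho}$ in \eqref{eq:fl1mu} and \eqref{eq:functionsEst2}). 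For instance $\|s_\mu\|_{L^1}\le\int_0^\infty|h_\mu'(ik)|\,\dd k\lesssim\mu\int_0^\infty(1/\mu+|g'|)/(|k|+\mu|g|)^2\,\dd k\le C\mu^{-1/\rho}$, while $\|\dot s_\mu\|_{L^1}\lesssim\mu\int_0^\infty(|kg'|+|g|)/(|k|+\mu|g|)^2\,\dd k\lesssim\mu\int_0^\infty|g|/(|k|+\mu|g|)^2\,\dd k\le C$.

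The powers of $\mu$ are dictated by the exact scaling $s_\mu(t)=s_1(\mu^{1/\rho}t)$ valid for the pure fractional kernel, and \eqref{weakboundhatbAss}--\eqref{strongboundhatbAss} are precisely the quantitative substitutes for this scaling in the general case. The \emph{main obstacle} is therefore the bookkeeping in the last step: after differentiating $\lambda^2h_\mu-\lambda$ up to three times one obtains, besides the clean leading terms above, several mixed products (for example $\lambda^2D'D''/D^3$ with $D=\lambda+\mu\hat b$) whose boundary values must each be shown to reduce --- after the 2-regularity identities and after distinguishing the regimes $|k|\lesssim\mu^{1/\rho}$ and $|k|\gtrsim\mu^{1/\rho}$ --- to one of the integrands explicitly bounded in \eqref{weakboundhatbAss} or \eqref{strongboundhatbAss} with exactly the right $\mu$-exponent; the delicate points are the behaviour near $k=0$ and as $k\to\infty$. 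This is carried out exactly as in the estimation of the Hardy norms of $h_\mu'$ and $h_\mu''$ in \cite{MP97}.
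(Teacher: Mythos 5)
Your proposal follows essentially the same route as the paper: the positive-type property of $b$ for \eqref{eq:contr}, and, for the $L^1$ bounds, explicit Laplace-transform formulas combined with the sectoriality lower bound $|\lambda+\mu\hat b(\lambda)|\ge c(|\lambda|+\mu|\hat b(\lambda)|)$, 2-regularity, Proposition \ref{Hardy}, and the integral conditions \eqref{weakboundhatbAss} (giving the $O(1)$ bounds) and \eqref{strongboundhatbAss} (giving the $O(\mu^{-1/\rho})$ bounds). The only minor deviation is that you obtain \eqref{eq:fl1mu} directly from Proposition \ref{Hardy} applied to $\widehat{s_\mu}$ --- which also works, though there the $L^p$ hypothesis on the boundary function comes from \eqref{strongboundhatbAss} (which forces $1/(|k|+\mu|g(k)|)\in L^2$) rather than from condition (1) of Assumption \ref{ass:convKernel} --- whereas the paper deduces \eqref{eq:fl1mu} from the bounds on $\dot s_\mu$ in \eqref{eq:functionsEst1}--\eqref{eq:functionsEst2} via the Fubini-type argument of \cite{CDaPP}.
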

\begin{proof}
Since $\Re \hat{b}(\lambda)\ge 0$ for $\Re\lambda>0$, we have that that the kernel $b$ is of positive type and hence \eqref{eq:contr} follows from \cite[Corollary 1.2]{pruss}. The estimate in \eqref{eq:fl1mu} is a direct consequence of the estimates on $\dot{s}_{\mu}$ in \eqref{eq:functionsEst1} and \eqref{eq:functionsEst2} as shown in the proof of \cite[Lemma 3.1]{CDaPP}.
 Thus, we need to show that the Laplace transform  of each of the  left hand terms in \eqref{eq:functionsEst1} and \eqref{eq:functionsEst2} has appropriate bounds; that is, that the boundary function of their Laplace transforms are in $L^p$ for some $1\le p<\infty$ and that its derivative is in $L^1$. Let $F^1_\mu=\widehat{\dot s_\mu}$, $f^1_\mu=\widehat {t\ddot s_\mu}$, $F^2_\mu=\widehat{t\dot s_\mu}$ and $f^2_\mu=\widehat { t^2\ddot s_\mu}$. Then
 $F^1_\mu(\lambda)=\frac{\lambda}{\lambda+\mu\hat b(\lambda)}-1=-\mu\frac{\hat b(\lambda)}{\lambda+\mu\hat b(\lambda)}$ and
 $$f^1_\mu(\lambda)=\frac{d}{d\lambda} \left( \mu\frac{\lambda\hat b(\lambda)}{\lambda+\mu\hat b(\lambda)} +\dot s_\mu(0)\right)=\mu\frac{\mu\hat b(\lambda)^2+\lambda^2 \hat b'(\lambda))}{(\lambda+\mu\hat b(\lambda))^2}, $$
whereas
$$F^2_\mu=-\frac{d}{d\lambda}F^1_\mu=\mu\frac{\lambda\hat{b}'(\lambda)-\hat b(\lambda)}{(\lambda+\mu\hat b(\lambda))^2}$$
and
\begin{equation}\begin{split}
f^2_\mu=&-\frac{d}{d\lambda}f^1_\mu\\ =& \mu \frac{\mu\lambda^2 \hat b''(\lambda)\hat b(\lambda) 		-2\mu\lambda^2( \hat b'(\lambda))^2+\lambda^3  \hat b''(\lambda) 		-4\lambda^2 		\hat b'(\lambda)+2\mu \hat b(\lambda)^2+4\lambda 		\hat b(\lambda)}{(\lambda+\mu \hat b(\lambda))^3}.
\end{split}\end{equation}
We also need estimates for
\begin{equation}\begin{split}
  \frac{d}{d\lambda}F^2_\mu(\lambda)=&\mu\frac{ \lambda \hat b''(\lambda ) (\lambda+\mu \hat b(\lambda ) )+2 \left(\hat b(\lambda )-\lambda  \hat b'(\lambda )\right) \left(\mu  \hat b'(\lambda )+1\right)}{(\lambda +\mu  \hat b(\lambda ))^3}
  \end{split}
\end{equation}
and
\begin{equation}\begin{split}
  \frac{d}{d\lambda}f^2_\mu(\lambda)=&\mu  \lambda ^2\frac{ \lambda ^2 \hat b^{(3)}(\lambda )+6 \mu ^2 \hat b'(\lambda )^3+6 \mu  \hat b'(\lambda )^2-6 \lambda  \mu  \hat b'(\lambda ) \hat b''(\lambda )}{(\lambda +\mu  \hat b(\lambda ) )^4} \\
  &+\mu^2  \hat b(\lambda )^2\frac{ \lambda ^2 \mu  \hat b^{(3)}(\lambda )+6 \lambda  \mu  \hat b''(\lambda )+6 \mu  \hat b'(\lambda )+6}{(\lambda +\mu  \hat b(\lambda ))^4}\\
  &+2 \lambda  \mu^2  \hat b(\lambda )\frac{ -6 \mu  \hat b'(\lambda )^2+\lambda  \left(\lambda  \hat b^{(3)}(\lambda )+3 \hat b''(\lambda )\right)-3 \hat b'(\lambda ) \left(\lambda  \mu  \hat b''(\lambda )+2\right)}{(\lambda +\mu  \hat b(\lambda ))^4}.
  \end{split}
\end{equation}
As $b$ is sectorial, there exists $C$ such that $|\lambda+\mu \hat b(\lambda)|\ge C(|\lambda|+\mu|\hat b(\lambda)|)$. Use the 2-regularity of $b$; that is, $|\lambda^n\hat b^{(n)}(\lambda)|\le C|\hat b(\lambda)|$, $n\le 2$, to obtain that  $F^1_\mu,f^1_\mu\in H^p(\R)$ for some $1\le p<\infty$ and
$$\int_0^\infty\left|F^2_\mu(ik)\right|\,dk\le C\mu \int_0^\infty \frac{|g(k)|}{(|k|+\mu|g(k)|)^2}d\lambda\le C.$$
The same estimate holds for  $f_\mu^2=-\frac{d}{d\lambda} f_\mu^1$ and hence by Assumption \ref{ass:convKernel} and Proposition \ref{Hardy} we have on one hand \eqref{eq:functionsEst1} and on the other, $f_\mu^2$ and $F^2_\mu\in H^1$. Again using 2-regularity it is straight-forward to see that
$$\left|\frac{d}{d\lambda}F^2_\mu(\lambda=ik)\right|+\left|\frac{d}{d\lambda}f^2_\mu(\lambda=ik)\right|\le C\mu \frac{|k|^2|g'''(k)|+ |k||g''(k)|+|g'(k)|+1/\mu}{(|k|+\mu|g(k)|)^2}$$
and hence by Assumption \ref{ass:convKernel} and Proposition \ref{Hardy} we obtain \eqref{eq:functionsEst2}.

\end{proof}
The next set of results specifies the smoothing properties of the solution of the linear, homogeneous, deterministic problem.
\begin{lemma}
	\label{lem:estSolOp}
	Suppose that the convolution kernel $b$ satisfies Assumption \ref{ass:convKernel}. Then
			\begin{align}
				\|A^sS(t)\|_{\cL(H)}&\leq C t^{-s\rho},\quad t>0,
				s\in[0,1/\rho],
				\label{eq:estS}\\
				\|A^s\dot S(t)\|_{\cL(H)}&\leq Ct^{-s\rho-1},\quad
				t>0,~s\in [0,1/\rho],\label{eq:estSDot}\\
                 \|A^{-s}\dot{S}(t)\|&\leq C\|b\|_{L^1(0,t)}^st^{s-1},~s\in [0,1].\label{eq:estSDot1}
\end{align}
In particular, if $b$ satisfies the sufficient conditions (for Assumption \ref{ass:convKernel}) laid out in Remark \ref{rem:realb}  together with \eqref{ass:assymp}, then
\begin{align}				
\|A^{-s}\dot{S}(t)\|_{\cL(H)}&\leq
				C t^{\rho s-1},\quad
				t>0,\ s\in[0,1].\label{eq:estSDot2}
			\end{align}
%	\end{enumerate}
		\end{lemma}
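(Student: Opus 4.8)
The plan is to push everything through the simultaneous spectral diagonalization \eqref{eq:sk}: since $\{e_k\}$ diagonalizes both $A$ and $S(t)$, for any real exponent $\sigma$ the operator $A^\sigma S(t)$ (resp. $A^\sigma \dot S(t)$) acts diagonally with eigenvalues $\lambda_k^\sigma s_{\lambda_k}(t)$ (resp. $\lambda_k^\sigma \dot s_{\lambda_k}(t)$), so that $\|A^\sigma S(t)\|_{\cL(H)}=\sup_k \lambda_k^\sigma|s_{\lambda_k}(t)|\le \sup_{\mu>0}\mu^\sigma|s_\mu(t)|$ and likewise for $\dot S$. Hence each of the four operator estimates reduces to a scalar bound on $\mu^{\pm s}|s_\mu(t)|$ or $\mu^{\pm s}|\dot s_\mu(t)|$ that is uniform in $\mu>0$, and I would obtain these by first proving endpoint bounds and then interpolating geometrically in the exponent.

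The main work, and the step I expect to be the conceptual crux (though technically elementary), is converting the time-integrated ($L^1$) bounds of Lemma \ref{lem:smu} into pointwise-in-time endpoint estimates. For this I would use the identity $t^m f(t)=\int_0^t \frac{\dd}{\dd\sigma}(\sigma^m f(\sigma))\,\dd\sigma$, valid since $s_\mu$ is bounded, $\dot s_\mu(0)=0$ directly from \eqref{eq:skeq}, and $b\in L^1_{\loc}$ renders $\dot s_\mu$ continuous (with $\ddot s_\mu$ integrable as in Lemma \ref{lem:smu}). Taking $f=s_\mu$, $m=1$ gives $t|s_\mu(t)|\le \|s_\mu\|_{L^1(\R_+)}+\|t\dot s_\mu\|_{L^1(\R_+)}\le 2C_0\mu^{-1/\rho}$ by \eqref{eq:fl1mu} and \eqref{eq:functionsEst2}, i.e. the endpoint $\mu^{1/\rho}|s_\mu(t)|\le Ct^{-1}$. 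Taking $f=\dot s_\mu$ with $m=1$ and $m=2$ yields $t|\dot s_\mu(t)|\le \|\dot s_\mu\|_{L^1(\R_+)}+\|t\ddot s_\mu\|_{L^1(\R_+)}\le C_0$ and $t^2|\dot s_\mu(t)|\le 2\|t\dot s_\mu\|_{L^1(\R_+)}+\|t^2\ddot s_\mu\|_{L^1(\R_+)}\le 3C_0\mu^{-1/\rho}$ via \eqref{eq:functionsEst1}--\eqref{eq:functionsEst2}, giving the endpoints $|\dot s_\mu(t)|\le Ct^{-1}$ and $\mu^{1/\rho}|\dot s_\mu(t)|\le Ct^{-2}$.

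With the endpoints in hand, \eqref{eq:estS} and \eqref{eq:estSDot} follow by geometric interpolation in $s\in[0,1/\rho]$: setting $\theta=s\rho\in[0,1]$ and writing $\mu^s|s_\mu(t)|=(\mu^{1/\rho}|s_\mu(t)|)^{\theta}|s_\mu(t)|^{1-\theta}$, the contraction bound \eqref{eq:contr} at $s=0$ and the endpoint at $s=1/\rho$ give $\mu^s|s_\mu(t)|\le Ct^{-\theta}=Ct^{-s\rho}$; the identical interpolation for $\dot s_\mu$ produces $\mu^s|\dot s_\mu(t)|\le Ct^{-2\theta}\cdot t^{-(1-\theta)}=Ct^{-s\rho-1}$. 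For \eqref{eq:estSDot1} I would interpolate instead in $s\in[0,1]$ between the $s=0$ endpoint $|\dot s_\mu(t)|\le Ct^{-1}$ just obtained and the $s=1$ endpoint $\mu^{-1}|\dot s_\mu(t)|\le \|b\|_{L^1(0,t)}$, the latter read directly off \eqref{eq:skeq} via $|\dot s_\mu(t)|\le \mu\,\|s_\mu\|_{L^\infty(\R_+)}\int_0^t|b(t-\sigma)|\,\dd\sigma$ together with \eqref{eq:contr}; writing $\mu^{-s}|\dot s_\mu(t)|=(\mu^{-1}|\dot s_\mu(t)|)^{s}|\dot s_\mu(t)|^{1-s}$ then gives $\mu^{-s}|\dot s_\mu(t)|\le C\|b\|_{L^1(0,t)}^{s}\,t^{s-1}$.

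Finally, \eqref{eq:estSDot2} is immediate from \eqref{eq:estSDot1}: under the extra hypothesis \eqref{ass:assymp} the Tauberian estimate \eqref{eq:l1b} gives $\|b\|_{L^1(0,t)}\le Ct^{\rho-1}$, and substitution yields $\|A^{-s}\dot S(t)\|_{\cL(H)}\le C\,t^{(\rho-1)s}\,t^{s-1}=C\,t^{\rho s-1}$ for $s\in[0,1]$. The only genuinely delicate ingredients — the $L^1$-in-time bounds on $s_\mu,\dot s_\mu,\ddot s_\mu$ — are supplied by Lemma \ref{lem:smu}, so the remaining argument is the above mechanical combination of the fundamental-theorem-of-calculus trick, the defining equation, and geometric interpolation.
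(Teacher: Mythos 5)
Your proposal is correct, and it rests on the same two pillars as the paper's proof --- the spectral diagonalization \eqref{eq:sk} reducing everything to scalar bounds uniform in $\mu$, and the weighted $L^1$ estimates of Lemma \ref{lem:smu} --- but the way you convert those $L^1$ bounds into pointwise bounds is genuinely different, and more self-contained. The paper proves \eqref{eq:estSDot} by first H\"older-interpolating the weighted integrals themselves, $\int_0^\infty t^{\delta+1}|\ddot s_\mu(t)|\,\dd t\le C_0\mu^{-\delta/\rho}$ for all $\delta\in(0,1]$, and then integrating the \emph{tail}, $\dot s_\mu(t)=-\int_t^\infty\ddot s_\mu(r)\,\dd r$ (using decay of $\dot s_\mu$ at infinity), which yields $\mu^s|\dot s_\mu(t)|\le C_0t^{-s\rho-1}$ for every $s$ in one stroke; for \eqref{eq:estS}, \eqref{eq:estSDot1} and the $s=0$ case it simply defers to \cite[Proposition 2.5]{kovacsprintems} and to \cite[Theorem 3.1]{pruss} for $\|\dot S(t)\|\le Ct^{-1}$. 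You instead integrate \emph{forward} from the origin via $t^mf(t)=\int_0^t\frac{\dd}{\dd\sigma}(\sigma^mf(\sigma))\,\dd\sigma$ to extract the endpoint pointwise bounds, and then interpolate geometrically using the exact identity $\mu^sX=(\mu^{1/\rho}X)^{s\rho}X^{1-s\rho}$; in doing so you re-derive the Pr\"uss bound $|\dot s_\mu(t)|\le Ct^{-1}$ and the externally cited estimates \eqref{eq:estS}, \eqref{eq:estSDot1} directly from \eqref{eq:contr}, \eqref{eq:fl1mu}, \eqref{eq:functionsEst1}, \eqref{eq:functionsEst2} and the defining equation \eqref{eq:skeq}, which is a real expository gain. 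One point deserves tightening: your parenthetical appeal to Lemma \ref{lem:smu} for ``$\ddot s_\mu$ integrable'' only gives \emph{weighted} integrability ($t\ddot s_\mu\in L^1(\R_+)$), which does not control $\ddot s_\mu$ near $t=0$; what the forward FTC actually needs is that $\sigma\mapsto\sigma^m\dot s_\mu(\sigma)$ is absolutely continuous on $[0,t]$, and this should be justified, e.g.\ by differentiating \eqref{eq:functions} to get $\ddot s_\mu=-\mu\bigl(b+b*\dot s_\mu\bigr)\in L^1_{\loc}(\R_+)$ since $b\in L^1_{\loc}$. (The paper's tail-integration trick sidesteps the origin entirely, at the price of needing decay at infinity.) With that one sentence added, your argument is complete; the treatment of \eqref{eq:estSDot2} via \eqref{eq:l1b} coincides with the paper's.
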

\begin{proof}
Estimate \eqref{eq:estS} follow from the scalar estimates in Lemma \ref{lem:smu} and we refer to the proof of  \cite[Proposition 2.5]{kovacsprintems} for details. Theorem \cite[Theorem 3.1]{pruss} shows that under Assumptions \ref{ass:A} and \ref{ass:convKernel} there is $C>0$ such that $\|\dot{S}(t)\|\le Ct^{-1}$, for all $t>0$.
Then \eqref{eq:estSDot1} follows easily from \eqref{eq:contr} and again we refer to the proof of  \cite[Proposition 2.5]{kovacsprintems} for further details.
The bound in \eqref{eq:estSDot2} follows from \eqref{eq:estSDot1} using \eqref{eq:l1b} from Remark \ref{rem:asss}.
	To show \eqref{eq:estSDot} first note that $s=0$ is included in \eqref{eq:estSDot2}. Next, we use Lemma \ref{lem:sd} and H\"older's inequality to conclude that, for $0<\delta<1$,
\begin{align*}
\int_{0}^{\infty}t^{\delta+1}|\ddot{s}_\mu(t)|\,\dd t&=\int_0^\infty |t^2\ddot{s}_\mu(t)|^{\delta}|t\ddot{s}_{\mu}(t)|^{1-\delta}\,\dd t\\
&\le \left(\int_0^\infty |t^2\ddot{s}_\mu(t)|\,\dd t\right)^{\delta}\left(\int_0^\infty |t\ddot{s}_\mu(t)|\,\dd t\right)^{1-\delta}\le C_0\mu^{-\frac{\delta}{\rho}},~\mu>0.
\end{align*}
Note that this estimate also holds for $\delta=1$, by \eqref{eq:functionsEst2}.
Therefore, taking into account that $\lim_{t\to \infty}\dot{s}_{\mu}(t)=0$ by \eqref{eq:estSDot1} with $s=0$, we have that
\begin{equation*}\label{eq:dse}
|\dot{s}_{\mu}(t)|=\left|\int_{t}^{\infty}\ddot{s}_\mu(r)\,\dd r\right|=\left|\int_{t}^{\infty}r^{-\delta-1}r^{\delta+1}\ddot{s}_\mu(r)\,\dd r\right|
\le C_0t^{-\delta-1}\mu^{-\frac{\delta}{\rho}}.
\end{equation*}
That is,
$$
|\mu^s\dot{s}_{\mu}(t)|\le C_0t^{-1-\rho s}, ~0< s \le\frac{1}{\rho}.
$$
Hence,
$$
\|A^s\dot{S}(t)x\|^2=\sum_{k=1}^{\infty}\left(\lambda_k^{s}\dot{s}_{\lambda_k}(t)\right)^2(x,e_k)^2\le C_0t^{-2-2\rho s}\|x\|^2
$$
and \eqref{eq:estSDot} follows.

%	\end{enumerate}
\end{proof}
The next lemma is key to be able to consider the border case in Remark \ref{ss:bcase}.
\begin{lemma}\label{lem:cont}
If the kernel $b$ satisfies Assumption \ref{ass:convKernel} and
			$x\in L^p(\Omega;\dot H^{r-1})$, then
the function $$t\mapsto \int_0^t
			S(\sigma)
			x\,\dd \sigma\in C([0,\infty);L^p(\Omega;\dot H^{r-1+2/\rho})).$$ Moreover, the following estimate holds
			\begin{align}
				\label{eq:estIntS}
				\left\|\int\limits_0^t S(\sigma) x\,\dd \sigma\right\|_{L^p(\Omega;\dot
				H^{r-1+2/\rho})}&\leq C\|x\|_{L^p(\Omega;\dot H^{r-1})},\quad
				t>0.
			\end{align}		
Furthermore, if $G\in L^p(\Omega;L^2_{0,\dot H^{r-1+1/\rho}})$, then  the function $$t\mapsto \int_0^t
			S(t-\sigma)
			G\,\dd W(\sigma) \in C([0,\infty);L^p(\Omega;\dot H^{r-1+2/\rho}).$$
			 Moreover, the following estimate holds
			\begin{align}
				\label{eq:estIntS2}
				\left\|\int\limits_0^t S(t-\sigma) G\,\dd
				W(\sigma)\right\|_{L^p(\Omega;\dot H^{r-1+2/\rho})}&\leq C\|G\|_{L^p(\Omega;L^2_{0,\dot H^{r-1+1/\rho}})},\quad
				t>0.
			\end{align}
\end{lemma}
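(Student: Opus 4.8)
The plan is to reduce both statements to scalar estimates on the functions $s_k$ via the spectral representation \eqref{eq:sk}, and then to feed in the bounds of Lemma \ref{lem:smu}. The essential point — and the reason this lemma reaches the spatial endpoint $s=r-1+\tfrac2\rho$ that is \emph{excluded} in Theorem \ref{thm:eu} — is that one must \emph{not} use the pointwise-in-time operator bound \eqref{eq:estS} with exponent $\tfrac1{2\rho}$, since $\|A^{1/(2\rho)}S(\tau)\|_{\mathcal{L}(H)}\le C\tau^{-1/2}$ would leave the nonintegrable factor $(t-\sigma)^{-1}$ in the It\^o isometry. Instead I would exploit the genuine square-integrability in time of each mode: interpolating \eqref{eq:contr} and \eqref{eq:fl1mu} gives
\begin{equation*}
\|s_{\mu}\|_{L^2(\R_+)}^2\le \|s_{\mu}\|_{L^\infty(\R_+)}\,\|s_{\mu}\|_{L^1(\R_+)}\le C_0\,\mu^{-1/\rho},\quad \mu>0,
\end{equation*}
while the deterministic part is governed by the companion bound $\lambda_k^{1/\rho}\,\big|\int_0^t s_k(\sigma)\,\dd\sigma\big|\le \lambda_k^{1/\rho}\|s_{\lambda_k}\|_{L^1(\R_+)}\le C_0$ (recall $s_k=s_{\lambda_k}$).

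For \eqref{eq:estIntS} I would expand in $\{e_k\}$ pathwise, using $A^{(r-1+2/\rho)/2}\int_0^tS(\sigma)x\,\dd\sigma=\sum_k\lambda_k^{(r-1)/2}\big(\lambda_k^{1/\rho}\int_0^ts_k(\sigma)\,\dd\sigma\big)(x,e_k)e_k$; Parseval together with the uniform bound above yields $\|\int_0^tS(\sigma)x\|_{\dot H^{r-1+2/\rho}}^2\le C_0^2\|x\|_{\dot H^{r-1}}^2$ for each $\omega$, and taking $L^p(\Omega)$-norms gives the estimate. Continuity in $t$ then follows from the same expansion applied to the increment $\int_t^{t+h}S(\sigma)x\,\dd\sigma$: each scalar coefficient $\int_t^{t+h}s_k(\sigma)\,\dd\sigma\to0$ as $h\to0$ while the summands stay dominated by the summable sequence $C_0^2\lambda_k^{r-1}(x,e_k)^2$, so dominated convergence over $k$ and then over $\Omega$ gives convergence in $L^p(\Omega;\dot H^{r-1+2/\rho})$.

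For \eqref{eq:estIntS2} I would apply Proposition \ref{prop:ito} after commuting the closed operator $A^{(r-1+2/\rho)/2}$ through the stochastic integral, reducing matters to bounding $\int_0^t\|A^{(r-1+2/\rho)/2}S(t-\sigma)G\|_{L^0_2}^2\,\dd\sigma$. Writing this out in $\{e_k\}$ and an orthonormal basis $\{f_j\}$ of $U_0$, and inserting $\int_0^ts_k(t-\sigma)^2\,\dd\sigma\le\|s_{\lambda_k}\|_{L^2(\R_+)}^2\le C_0\lambda_k^{-1/\rho}$, gives
\begin{equation*}
\int_0^t\|A^{(r-1+2/\rho)/2}S(t-\sigma)G\|_{L^0_2}^2\,\dd\sigma\le C_0\sum_{j,k}\lambda_k^{\,r-1+2/\rho-1/\rho}(Gf_j,e_k)^2=C_0\,\|G\|_{L^2_{0,\dot H^{r-1+1/\rho}}}^2,
\end{equation*}
where the exponent collapses precisely because $(r-1+\tfrac2\rho)-\tfrac1\rho=r-1+\tfrac1\rho$; this cancellation is exactly what makes the endpoint admissible, and it works only because $G$ is constant in time so that the frequency sum and the time integral can be interchanged. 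Taking square roots and $L^p(\Omega)$-norms yields \eqref{eq:estIntS2}.

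Finally, for continuity of the stochastic convolution I would split the increment as $\int_t^{t+h}S(t+h-\sigma)G\,\dd W(\sigma)+\int_0^t(S(t+h-\sigma)-S(t-\sigma))G\,\dd W(\sigma)$. The first term is handled exactly as in the estimate above but with $\int_0^hs_k(\tau)^2\,\dd\tau$ in place of $\|s_{\lambda_k}\|_{L^2}^2$, which tends to $0$ as $h\to0$ for each $k$ and stays dominated by $C_0\lambda_k^{-1/\rho}$; for the second term the scalar coefficient becomes $\int_0^t(s_k(\tau+h)-s_k(\tau))^2\,\dd\tau$, which tends to $0$ by $L^2(\R_+)$-continuity of translations (since $s_k\in L^2(\R_+)$) and is dominated by $4\|s_{\lambda_k}\|_{L^2(\R_+)}^2\le 4C_0\lambda_k^{-1/\rho}$. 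In both cases the summands are dominated by a constant multiple of $\lambda_k^{r-1+1/\rho}(Gf_j,e_k)^2$, so dominated convergence over $(j,k)$ and then over $\Omega$ gives continuity in $L^p(\Omega;\dot H^{r-1+2/\rho})$. The main obstacle throughout is this borderline gain of $\tfrac1\rho$ derivatives: it is unreachable through \eqref{eq:estS} alone and forces the use of the time-square-integrability of each $s_k$ together with the time-independence of $G$.
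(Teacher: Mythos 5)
Your proof is correct and takes essentially the same route as the paper: a spectral expansion via \eqref{eq:sk} combined with the scalar bounds \eqref{eq:contr} and \eqref{eq:fl1mu} of Lemma \ref{lem:smu} — the paper's proof of \eqref{eq:estIntS} is precisely your Parseval computation with $\lambda_k^{1/\rho}\|s_{\lambda_k}\|_{L^1(\R_+)}\le C_0$, and its proof of \eqref{eq:estIntS2} defers to \cite[Theorem 3.6]{kovacsprintems}, which rests on the same $L^2$-in-time bound $\|s_\mu\|_{L^2(\R_+)}^2\le C_0\mu^{-1/\rho}$ that you obtain by interpolating $L^\infty$ against $L^1$. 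The continuity claims the paper labels ``standard'' are exactly the dominated-convergence arguments you spell out, so your write-up simply fills in the details the paper outsources.
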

\begin{proof}
 Estimate \eqref{eq:estIntS} follows from the calculation, taking into account \eqref{eq:fl1mu},
			\begin{align*}
				\left\|\int\limits_0^t S(\sigma) x\,\dd\sigma\right\|^2_{L^p(\Omega;\dot
				H^{r-1+2/\rho})}
				=&\left\|\int\limits_0^t A^{\frac1\rho}S(\sigma)
				A^{\frac{r-1}2}x\,\dd \sigma\right\|^2_{L^p(\Omega;H)}\\
				=&\left\|\left(\sum\limits_{k=1}^\infty
				\int\limits_{0}^t\lambda_k^{\frac1\rho} s_{\lambda_k}(\sigma)\,\dd
				\sigma\right)^2(A^{\frac{r-1}2}x,e_k)^2\right\|_{L^p(\Omega)}\\
				\leq& C \|x\|^2_{L^p(\Omega;\dot
				H^{r-1})}.
			\end{align*}
			Continuity is standard now. Finally, \eqref{eq:estIntS2} can be shown in a similar fashion as \cite[Theorem 3.6]{kovacsprintems} and continuity follows again using standard arguments.
\end{proof}
Finally, we often make use of the following elementary estimate.
\begin{lemma}\label{lem:kappa}
	If $\kappa\in(-2,0)$ and $h,t>0$, then there is $C>0$ such that
	\begin{equation}\label{eq:kap1}	
		\int\limits_{t}^{t+h}\int\limits_0^{t}(\eta-\sigma)^{\frac{\kappa}2-1}\dd
		\sigma\dd\eta
		\leq C h^{\frac\kappa2+1}
\end{equation}
and
	\begin{equation}\label{eq:kap2}
		\int\limits_{t}^{t+h}\left(\int\limits_0^{t}(\eta-\sigma)^{\kappa-1}
		\dd \sigma
		\right)^{\frac 12}\dd \eta
		\leq Ch^{\frac{\kappa}2+1}.
	\end{equation}
\end{lemma}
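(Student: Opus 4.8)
The final statement to prove is Lemma~\ref{lem:kappa}, which asks for the two elementary integral bounds \eqref{eq:kap1} and \eqref{eq:kap2} when $\kappa\in(-2,0)$.

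My plan is to treat each estimate by explicitly evaluating the inner integral in $\sigma$ and then integrating in $\eta$, carefully tracking which exponents produce convergent integrals. For \eqref{eq:kap1}, the exponent in the inner integral is $\frac{\kappa}{2}-1$, which lies in $(-2,-1)$ since $\kappa\in(-2,0)$. I would substitute $\tau=\eta-\sigma$ so that, for fixed $\eta\in[t,t+h]$,
\[
\int_0^t(\eta-\sigma)^{\frac{\kappa}{2}-1}\,\dd\sigma=\int_{\eta-t}^{\eta}\tau^{\frac{\kappa}{2}-1}\,\dd\tau=\frac{1}{\frac{\kappa}{2}}\Big(\eta^{\frac{\kappa}{2}}-(\eta-t)^{\frac{\kappa}{2}}\Big).
\]
Since $\frac{\kappa}{2}<0$, this equals a positive constant times $\big((\eta-t)^{\frac{\kappa}{2}}-\eta^{\frac{\kappa}{2}}\big)$, which is bounded above by $C(\eta-t)^{\frac{\kappa}{2}}$. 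I would then integrate this bound over $\eta\in[t,t+h]$: because $\frac{\kappa}{2}\in(-1,0)$, the integral $\int_t^{t+h}(\eta-t)^{\frac{\kappa}{2}}\,\dd\eta=\frac{1}{\frac{\kappa}{2}+1}h^{\frac{\kappa}{2}+1}$ converges, giving exactly the claimed power $h^{\frac{\kappa}{2}+1}$.

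For \eqref{eq:kap2}, the same substitution on the inner integral (now with exponent $\kappa-1\in(-3,-1)$) yields
\[
\int_0^t(\eta-\sigma)^{\kappa-1}\,\dd\sigma=\frac{1}{\kappa}\Big(\eta^{\kappa}-(\eta-t)^{\kappa}\Big)\le C(\eta-t)^{\kappa},
\]
using $\kappa<0$ again to discard the $\eta^{\kappa}$ term. Taking the square root gives a factor bounded by $C(\eta-t)^{\kappa/2}$, and since $\frac{\kappa}{2}\in(-1,0)$ the outer integral $\int_t^{t+h}(\eta-t)^{\kappa/2}\,\dd\eta$ converges and equals a constant times $h^{\frac{\kappa}{2}+1}$, as required.

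The only real point of care, rather than a genuine obstacle, is ensuring the exponents stay in the ranges that keep the integrals convergent: the hypothesis $\kappa\in(-2,0)$ is exactly what guarantees $\frac{\kappa}{2}-1>-2$ and $\kappa-1>-3$ (so the inner integrals are finite and the boundary term $\eta^{\kappa/2}$ or $\eta^{\kappa}$ can be dropped for an upper bound), while simultaneously $\frac{\kappa}{2}>-1$ (so the outer integral over $\eta$ converges). I would also note the endpoint cases: if $\kappa=-2$ the inner integral in \eqref{eq:kap1} would diverge logarithmically, and if $\kappa\to 0$ the outer integral behaves fine, so the open interval $(-2,0)$ is sharp for this clean power bound. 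Since everything reduces to scalar integrals of pure powers, no deeper machinery is needed and the constant $C$ depends only on $\kappa$.
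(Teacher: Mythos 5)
Your proof is correct and follows essentially the same route as the paper's: both evaluate the inner integral explicitly, discard the positive term $\eta^{\kappa/2}$ (resp.\ $\eta^{\kappa}$) to obtain the upper bound $C(\eta-t)^{\kappa/2}$ (resp.\ $C(\eta-t)^{\kappa}$ before taking the square root), and then integrate in $\eta$ using $\frac{\kappa}{2}>-1$. One trivial slip in your closing sharpness remark: at $\kappa=-2$ the inner integral $\int_{\eta-t}^{\eta}\tau^{-2}\,\dd\tau$ is still finite for $\eta>t$; it is the outer $\eta$-integral of the resulting bound $(\eta-t)^{-1}$ that diverges logarithmically --- but this side comment does not affect the proof itself.
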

\begin{proof}
We have that
	\begin{align*}	
		\int\limits_{t}^{t+h}\int\limits_0^{t}(\eta-\sigma)^{\frac{\kappa}2-1}\dd
		\sigma\dd\eta
		&= C
			\int\limits_{t}^{t+h}(\eta-t)^{\frac\kappa2}-\eta^{\frac{\kappa}{2}}
			\dd\eta\leq C\int\limits_{t}^{t+h}(\eta-t)^{\frac\kappa2}\dd \eta=C h^{\frac\kappa2+1}
\end{align*}
and that
\begin{align*}
		\int\limits_{t}^{t+h}\left(\int\limits_0^{t}(\eta-\sigma)^{\kappa-1}
		\dd \sigma
		\right)^{\frac 12}\dd \eta
		=&C
		\int\limits_{t}^{t+h}\left((\eta-t)^{\kappa}-\eta^\kappa
		\right)^{\frac 12}\dd \eta
		\leq C
		\int\limits_{t}^{t+h}(\eta-t)^{\frac{\kappa}{2}}
		\dd \eta
		\leq Ch^{\frac{\kappa}2+1}.
	\end{align*}
\end{proof}

\bibliographystyle{plain}

%\bibliography{main}

\end{document}